\documentclass[reqno]{amsart}
\usepackage{amsthm,amsmath,amsmath,amssymb,amsbsy,graphics,hyperref}
\setlength{\textwidth}{6in}
\setlength{\textheight}{8.5in}
\hoffset=-0.5in


\newcommand{\comment}[1]{{}}


\theoremstyle{plain}
\newtheorem{theorem}{Theorem}
\numberwithin{theorem}{section}
\newtheorem{lemma}[theorem]{Lemma}
\newtheorem{proposition}[theorem]{Proposition}
\newtheorem{corollary}[theorem]{Corollary}
\newtheorem{conjecture}[theorem]{Conjecture}

\theoremstyle{definition}
\newtheorem{definition}[theorem]{Definition}

\newtheorem{example}[theorem]{Example}

\theoremstyle{remark}

\newtheorem{problem}[theorem]{Problem}


\newcommand{\bd}{\partial^{\phantom{*}}} 
\newcommand{\bdo}{\partial} 
\newcommand{\cbd}{\partial^*}
\newcommand{\wbd}{\hat\partial} 
\newcommand{\wcbd}{\hat\partial^*}

\newcommand{\stot}{s^{\rm tot}} 
\newcommand{\sud}{s^{\rm ud}} 
\newcommand{\sdu}{s^{\rm du}} 
\newcommand{\sany}{s^\bullet} 
\newcommand{\Ltot}{L^{\rm tot}} 
\newcommand{\Lud}{L^{\rm ud}}
\newcommand{\Ldu}{L^{\rm du}}
\newcommand{\Lany}{L^\bullet}
\newcommand{\Etot}{E^{\rm tot}} 
\newcommand{\Eud}{E^{\rm ud}}
\newcommand{\Edu}{E^{\rm du}}
\newcommand{\Eany}{E^\bullet}

\newcommand{\Swtot}{\hat{s}^{\rm tot}} 
\newcommand{\Swud}{\hat{s}^{\rm ud}}

\newcommand{\Swany}{\hat{s}^\bullet}
\newcommand{\Lwtot}{\hat{L}^{\rm tot}} 
\newcommand{\Lwud}{\hat{L}^{\rm ud}}
\newcommand{\Lwdu}{\hat{L}^{\rm du}}
\newcommand{\Lwany}{\hat{L}^\bullet}
\newcommand{\Ewtot}{\hat{E}^{\rm tot}} 
\newcommand{\Ewud}{\hat{E}^{\rm ud}}

\newcommand{\Ewany}{\hat{E}^\bullet}

\newcommand{\Stot}{\mathbf{s}^{\rm tot}} 



\DeclareMathOperator{\dir}{dir}
\DeclareMathOperator{\id}{id}
\DeclareMathOperator{\im}{im}
\DeclareMathOperator{\rank}{rank}

\DeclareMathOperator{\Spec}{Spec}
\DeclareMathOperator{\del}{del}
\DeclareMathOperator{\link}{link}

\DeclareMathOperator{\bnd}{bd}


\newcommand{\CST}{{\mathcal T}}
\newcommand{\HH}{\tilde H} 
\newcommand{\Prism}{{\bf P}}
\newcommand{\PO}{{\bf 0}}
\newcommand{\PI}{{\bf 1}}

\newcommand{\Del}[2]{\del_{#1}{#2}} 
\newcommand{\Link}[2]{\link_{#1}{#2}}

\newcommand{\Nn}{\mathbb{N}}
\newcommand{\Qq}{\mathbb{Q}}
\newcommand{\Rr}{\mathbb{R}}
\newcommand{\Zz}{\mathbb{Z}}


\newcommand{\q}{r} 

\newcommand{\betti}{\tilde\beta} 

\newcommand{\zeq}{\overset{\circ}{=}}
\newcommand{\st}{\colon}
\newcommand{\qandq}{\quad\text{and}\quad}
\newcommand{\0}{\emptyset}
\newcommand{\X}{\circledast} 

\newcommand{\sign}{\varepsilon}

\newcommand{\x}{\times}

\newcommand{\sm}{\backslash}

\raggedbottom
\begin{document}

\date{May 5, 2010}

\title{Cellular Spanning Trees and Laplacians of Cubical Complexes}

\author{Art M.\ Duval}
\address{Department of Mathematical Sciences,
University of Texas at El Paso,
El Paso, TX 79968-0514}

\author{Caroline J.\ Klivans}
\address{Departments of Mathematics and Computer Science,
The University of Chicago,
Chicago, IL 60637}

\author{Jeremy L.\ Martin}
\address{Department of Mathematics,
University of Kansas,
Lawrence, KS 66047}

\thanks{Third author partially supported by an NSA Young Investigators Grant.}
\keywords{Cell complex, cubical complex, spanning tree, tree enumeration, Laplacian, spectra, eigenvalues, shifted cubical complex}
\subjclass[2000]{Primary 05A15; Secondary 05E99, 05C05, 05C50, 15A18, 57M15}

\dedicatory{``Is there a $q$-analogue of that?'' --- Dennis Stanton.}

\begin{abstract}
We prove a Matrix-Tree Theorem enumerating the spanning trees of
a cell complex in terms of the eigenvalues of its cellular Laplacian
operators, generalizing a previous result for simplicial complexes.
As an application, we obtain explicit formulas for spanning
tree enumerators and Laplacian eigenvalues of cubes; the latter
are integers.  We prove a weighted version of the eigenvalue
formula, providing evidence for a conjecture on weighted
enumeration of cubical spanning trees.
We introduce a cubical analogue of shiftedness, and obtain a recursive
formula for the Laplacian eigenvalues of shifted cubical complexes,
in particular, these eigenvalues are also integers.
Finally, we recover Adin's enumeration of spanning trees of a complete
colorful simplicial complex from the cellular Matrix-Tree Theorem together
with a result of Kook, Reiner and Stanton.
\end{abstract}

\maketitle

\section{Introduction} \label{intro}

\subsection{Cellular spanning trees}
In~\cite{DKM}, the authors initiated the study of \emph{simplicial spanning
trees}: subcomplexes of a simplicial complex that behave much like the
spanning trees of a graph.  The central result of \cite{DKM} is a
generalization of the Matrix-Tree Theorem, enumerating
simplicial spanning trees in terms of eigenvalues of
combinatorial Laplacians.  In this paper, we extend our field of inquiry
to the setting of arbitrary cell complexes and their Laplacians.

Let $X$ be a $d$-dimensional cell complex; we write $X_i$ for the set of $i$-cells of $X$.
A \emph{cellular spanning tree}, or just a \emph{spanning tree}, of
$X$ is a $d$-dimensional subcomplex $Y\subseteq X$ with the same $(d-1)$-skeleton,
whose (reduced) homology satisfies the two conditions $\HH_d(Y;\Zz) = 0$
and $|\HH_{d-1}(Y;\Zz)| < \infty$.  These two conditions
imply that $|Y_d| = |X_d| - \betti_d(X) + \betti_{d-1}(X)$,
where $\betti_d$ and $\betti_{d-1}$ denote (reduced) Betti numbers.
In fact, any two of these three conditions together imply the third.
In the case $d=1$, a cellular spanning tree is just a spanning tree of $X$ in the
usual graph-theoretic sense; the first two conditions above say
respectively that $Y$ is acyclic and connected, and the third condition
says that $Y$ has one fewer edge than the number of vertices in $X$.

Let $C_i$ denote the $i^{th}$ cellular chain group of $X$ with coefficients in $\Zz$, and let $\bd_i\colon C_i \to C_{i-1}$ and $\cbd_i\colon C_{i-1}
\to C_i$ be the cellular boundary and coboundary maps
(where we have identified cochains with chains via the natural inner product).
The \emph{$i^{th}$ up-down, down-up and total combinatorial Laplacians}
are respectively
  $$\Lud_i=\bd_{i+1}\cbd_{i+1}, \qquad
    \Ldu_i=\cbd_i\bd_i,\qquad
    \Ltot_i = \Lud_i+\Ldu_i,$$
which may be viewed either as endomorphisms on $C_i$, or as square symmetric
matrices, as convenient.  We are interested in the \emph{spectra} of these
Laplacians, that is, their multisets of eigenvalues, which we denote by
$\sud_i(X)$,  $\sdu_i(X)$, and $\stot_i(X)$ respectively.
Combinatorial Laplacians seem to have first appeared in the
work of Eckmann~\cite{Eck} on finite-dimensional Hodge theory, in which
the $i^{th}$ homology group of a chain complex
is identified with $\ker(L_i)$ via the direct sum decomposition
$C_i=\im\bd_{i+1} \oplus \ker L_i \oplus \im\cbd_i$.
As the name suggests, the combinatorial Laplacian is a discrete version
of the Laplacian on differential forms for a Riemannian manifold;
Dodziuk and Patodi~\cite{DP} proved that for suitably nice triangulations, the
eigenvalues of the combinatorial and analytic Laplacians converge to each other.

For $0\leq k\leq d$, let $\CST_k(X)$ denote the set of all spanning $k$-trees of $X$
(that is, the spanning trees of the $k$-skeleton of~$X$).
Let
\begin{align*}
  \tau_k(X) &= \sum_{Y\in\CST_k(X)} |\HH_{k-1}(Y;\Zz)|^2,\\
  \pi_k(X) &= \prod_{0\neq\lambda\in\sud_{k-1}(X)} \lambda \qquad\text{for } k\geq 1.
\end{align*}
We set $\pi_0=|X_0|$, the number of vertices of~$X$,
for reasons that will become clear later.
Note that $\tau_1(X)$ is just the number of spanning trees of the 1-skeleton
of $X$.  Bolker \cite{Bolker} was the first to observe that enumeration of
higher-dimensional trees requires some consideration of torsion; the
specific summand $|\HH_{k-1}(Y;\Zz)|^2$, first noticed by Kalai \cite{Kalai},
arises from an application of the Binet--Cauchy theorem.
The precise relationship between the families of invariants $\{\pi_k(X)\}$
and $\{\tau_k(X)\}$ is as follows.

\medskip
\noindent\textbf{Theorem~\ref{thm:CMTT} [Cellular Matrix-Tree Theorem]}.\ 
\textit{
Let $d\geq 1$, and let $X^d$ be a cell complex such that $H_i(X;\Qq)=0$ for all $i<d$.  Fix a spanning $(d-1)$-tree $Y\subset X$, let $U=Y_{d-1}$, and let $L_U$ be the matrix obtained from $\Lud_{d-1}(X)$ by deleting the rows and columns corresponding to $U$.  Then:}
\begin{align*}  
\pi_d(X) &= \frac{\tau_d(X) \tau_{d-1}(X)}{|\HH_{d-2}(X;\Zz)|^2} \qquad{\rm and}\\
\tau_d(X) &= \frac{|\HH_{d-2}(X;\Zz)|^2}{|\HH_{d-2}(Y;\Zz)|^2} \det L_U.
\end{align*}
\medskip

These formulas are identical to those of the Simplicial Matrix-Tree Theorem
\cite[Theorem~1.3]{DKM}, except that they apply to the wider class
of cell complexes.  Note that the classical Matrix-Tree Theorem for graphs is
just the special case $d=1$.  Our proof, in Section~\ref{cellular-section},
is similar to that in \cite{DKM}, and ultimately stems from the work of
Kalai~\cite{Kalai}.  It is not hard to generalize the result to a weighted
version, Theorem~\ref{thm:WCMTT}.

Of particular interest to us is the problem of which complexes are
\emph{Laplacian integral}, that is, all of whose Laplacians have
integer eigenvalues.  This question seems to have been first raised
by Friedman~\cite{Friedman}.
While no general characterization of Laplacian integrality is known,
many classes of simplicial complexes are known to have combinatorially
meaningful Laplacian integer spectra.  These
include matroid complexes~\cite{KRS}, 
shifted complexes~\cite{DR}, matching complexes~\cite{Dong-Wachs},
and chessboard complexes~\cite{Friedman-Hanlon}.

In Section~\ref{cellular-section} of the paper, we develop this machinery more carefully
and prove the Cellular Matrix-Tree Theorem and its weighted analogue.

\subsection{Cubical complexes}
In Section~\ref{sec:cubical}, we study Laplacian integrality for cell
complexes that are cubical rather than simplicial.  Specifically, the
$n$-dimensional unit cube $Q_n\subset\Rr^n$ can be regarded as a cell complex whose faces
are indexed by the ordered $n$-tuples $f=(f_1,\dots,f_n)$,
where $f_i\in\{0,1,\X\}$, with $f\subseteq g$ iff
$f_i=g_i$ whenever $g_i\neq\X$; here
$\dim f=\#\{i\st f_i=\X\}$.  
Intuitively, we think of $\X$ as the open interval $(0,1)$.
A \emph{proper cubical complex} is
then an order ideal in this face poset.  (This is a more restricted class
than the more usual definition of ``cubical complex'', meaning any cell
complex in which the closure of each cell is a cube.)

For a proper cubical complex $X\subseteq Q_n$, the \emph{prism} over $X$
is the cell complex $\Prism X:=X\x\{0,1,\X\}\subseteq Q_{n+1}$.
The cellular Laplacian spectra of $X$ determine those of
$\Prism X$; the precise formulas are given in Theorem~\ref{prism-spectrum}.
Since the cube $Q_n$ itself is an iterated prism,
it follows from these results that $Q_n$ and all its skeletons are Laplacian
integral, with spectra given by the formulas
\begin{align}
 \label{Qskel:1}
\sum_{\lambda\in \stot_k(Q_n)} \!\!\! \q^\lambda &= \binom{n}{k} \q^{2k} (1+\q^2)^{n-k},\\
 \label{Qskel:2}
\sum_{\lambda\in \sud_k(Q_n)} \!\!\! \q^\lambda &\zeq
    \sum_{i=k+1}^n \binom{n}{i} \binom{i-1}{k} \q^{2i}
\end{align}
(Theorem~\ref{cube-eigenvalue-theorem}),
where $\zeq$ means ``equal up to constant coefficient'' (i.e.,
up to the multiplicity of zero as an eigenvalue).
Applying the Cellular Matrix-Tree Theorem to formula~\eqref{Qskel:2} gives
  $$\tau_k(Q_n) = \prod_{j=k+1}^n (2j)^{\binom{n}{j} \binom{j-2}{k-1}}$$
(Corollary~\ref{thm:general-count}).  This generalizes the well-known count
$\prod_{j=2}^n (2j)^{\binom{n}{j}}$
of spanning trees of the $n$-cube graph (see, e.g., \cite[Example~5.6.10]{EC2}).

\subsection{Weighted spanning tree enumerators}
In Section~\ref{sec:weighted}, we discuss weighted spanning tree enumerators
of skeletons of cubes.

To each face $f=(f_1,\dots,f_n)\in Q_n$, associate the monomial
  $$\xi_f = \prod_{i:f_i=\X} q_i \prod_{i:f_i=0} x_i \prod_{i:f_i=1} y_i,$$
in commuting indeterminates.  For a cubical subcomplex $X\subseteq Q_n$,
we consider the weighted spanning tree enumerator
  $$\hat\tau_k(X) = \sum_{Y\in\CST_k(X)} \prod_{f\in Y_k} \xi_f.$$
In principle, one can calculate $\hat\tau_k(X)$ by replacing the Laplacians of~$X$
with their weighted versions and applying Theorem~\ref{thm:WCMTT}.  In practice,
the difficulty is that the eigenvalues of the weighted Laplacians of cubical
complexes --- even for such basic cases as skeletons of cubes --- are in general
 not polynomial.  Furthermore, the corresponding weighted cellular boundary maps
$\partial$ do not satisfy $\partial^2=0$, and so do not give the structure of an
algebraic chain complex.

In order to overcome this difficulty we replace the natural ``combinatorial''
weighting $\xi_f$ with a closely related ``algebraic'' weighting by Laurent monomials,
whose boundary maps do satisfy $\partial^2=0$.
The algebraically weighted Laplacian spectra of $Q_n$ is given explicitly as
follows (Theorem~\ref{cube-weighted-eigenvalue-theorem}):
$$
\sum_{\lambda\in \Swtot_k(Q_n)} \!\!\! \q^\lambda 
 = \sum_{j=i}^n \binom{j}{i} e_j(\q^{u_1},\dots,\q^{u_n}),$$
where $u_i = q_i^2/x_i^2 + q_i^2/y_i^2$
and $e_j$ denotes the $j^{th}$ elementary symmetric function.

In the case of shifted simplicial complexes, a weighted version of the Simplicial Matrix-Tree Theorem
can be used to translate the
knowledge of the algebraically weighted Laplacian spectrum into an exact
combinatorial formula for the weighted tree enumerators $\tau_k(X)$
(see \cite{DKM}).  We had hoped that this approach would succeed in the case
of skeletons of cubes, but the exact form of the relationship
between the two weightings has so far eluded us.  On the other hand, there is
strong evidence that the combinatorial
spanning tree enumerator $\tau_k(Q_n)$ is given by the following formula (Conjecture~\ref{conj:new}):

$$\hat\tau_k(Q_n) =
  (q_1\cdots q_n)^{\displaystyle \sum_{i=k-1}^{n-1}\binom{n-1}{i}\binom{i-1}{k-2}}
  \prod_{\substack{A\subseteq[n]\\|A|\geq k+1}} \left[
  \sum_{i\in A} \left(q_i(x_i+y_i)\prod_{j\in A\sm i} x_j y_j \right) \right]^{\displaystyle{\binom{|A|-2}{k-1}}}.$$

\subsection{Shifted cubical complexes}
In Section~\ref{sec:shifted}, we introduce an analogue of shiftedness
for cubical complexes.  

Shifted complexes are a well-known and important family of
simplicial complexes; see, e.g., \cite{BK,Kalai_shifting}
as general references.  They may be defined 
as certain iterated near-cones, or as order ideals with respect
to a natural partial ordering of integer sets.
The Laplacian spectrum of a shifted simplicial complex $\Delta$ can be
expressed recursively in terms of the spectra of its link and deletion
\cite[Lemma~5.3]{DR}, and explicitly as the
transpose of its vertex-facet degree sequence \cite[Theorem~1.1]{DR}.
In particular, the eigenvalues are integers.

Our definition of a cubical analogue of shiftedness seeks to extend all these
properties of shifted simplicial complexes to the cubical setting.  First,
shifted subcomplexes of $Q_n$ are
in bijection with the shifted simplicial complexes on $n$ vertices,
via the ``mirroring'' operation as described in Babson, Billera, and
Chan \cite{BBC}; roughly speaking, vertices of a simplicial complex
correspond to directions of a cubical complex.  Like their simplicial
cousins, shifted cubical complexes are a particular type of iterated
near-prism.  Moreover, their eigenvalues may be described recursively
in terms of the cubical link and deletion.
An immediate corollary of this recurrence is that
shifted cubical complexes are Laplacian
integral.
We present a number of natural questions for further research on shifted cubical complexes.

\subsection{Duality}
In Section~\ref{sec:duality}, we study the connections between the Laplacian
spectra of a cell complex and its dual complex.

Two cell complexes $X,Y$ are \emph{dual} if there is an
inclusion-reversing bijection
$f\mapsto f^*$ from the cells of~$X$ to the cells of $Y$,
with $\dim f+\dim f^*=d$ for all $f$, such that
the boundary maps of $X$ equal the coboundary maps of $Y$.  In this case,
we have an immediate duality on the Laplacian spectra:
$\stot_i(X)=\stot_{d-i}(Y)$ for all~$i$.
Moreover, for subsets $T\subseteq X_i$ and $U=\{f^* \st f\in X_i\sm T\}$,
the subcomplex $X_T=T\cup X_{(i-1)}$ forms a spanning $i$-tree of~$X$
if and only if
the subcomplex $Y_U=U\cup Y_{(d-i-1)}$ forms a spanning $(d-i)$-tree of~$Y$
(Proposition~\ref{dual-complexes}).\footnote{As pointed out by a referee,
$Y_U=\{f^*\st f\not\in X_T\}$, a relationship reminiscent of Alexander
duality for simplicial complexes.}
From a matroid point of view,
this is essentially the statement that
the matroid represented by the columns of
$\bd_{X,i}$ is dual to the matroid represented by the columns of $\bd_{Y,d-i}$;
the bases of these matroids are cellular spanning trees.
(If $X$ is a simplicial complex, this matroid is known as a \emph{simplicial
matroid}; see, e.g., \cite{CL}.)

The cellular dual of the $n$-cube is the $n$-dimensional cross-polytope.
This is an instance of a \emph{complete colorful complex},
whose spanning tree enumerators were calculated by Adin~\cite{Adin}.
In addition, complete colorful complexes are matroid complexes (corresponding
to products of rank-1 matroids). Kook, Reiner and Stanton determined
the Laplacian spectra of matroid complexes in \cite{KRS}.  We explore
the connection between all these results.  In particular,
we explain how to derive
Adin's formula from the Kook-Reiner-Stanton formula together
with the Cellular Matrix-Tree Theorem, and give an alternate calculation
of the (unweighted) spectra of cubes (Theorem~\ref{cube-eigenvalue-theorem})
using duality together with Adin's formula.

We thank Gil Kalai for suggesting cubical complexes as fertile ground
for exploration; Eran Nevo for solving Problem \ref{homotopy_problem};
and two anonymous referees for their careful reading and many
valuable suggestions.

\section{Cellular spanning trees}
\label{cellular-section}

\subsection{Cell complexes}

See Hatcher \cite{Hatcher} for definitions and basic facts about cell complexes.
We write $X_i$ for the set of all $i$-dimensional cells of $X$, and
$X_{(i)}$ instead of Hatcher's $X^i$ for the $i$-skeleton $X_0\cup X_1\cup\cdots\cup X_i$.
The notation $X^d$ indicates that $X$ has dimension $d$.
We borrow some standard terminology from the theory of simplicial complexes:
a cell of $X$ not contained in the closure of any other cell is called a \emph{facet}
of~$X$, and we say that $X$ is \emph{pure} if all its facets have the same dimension.

Let $R$ be a ring (if unspecified, assumed to be $\Zz$), and let $C_i(X)$ denote the $i^{th}$ cellular chain group of $X$, i.e., the free $R$-module with basis $\{[F] \st F\in X_i\}$.  We then have cellular
boundary and coboundary maps
  \begin{align*}
  \bd_{X,i}  &\colon C_i(X) \to C_{i-1}(X),\\
  \cbd_{X,i} &\colon C_{i-1}(X) \to C_i(X),
  \end{align*}
where we have identified cochains with chains via the natural
inner product.  We will abbreviate the subscripts in
the notation for boundaries and coboundaries whenever no ambiguity can arise.  

We will often regard $\bd_i$ (resp.\ $\cbd_i$)
as a matrix whose columns and rows (resp.\ rows and columns)
are indexed by $X_i$ and $X_{i-1}$ respectively.

The \emph{$i^{th}$ (reduced) homology group} of $X$ is
$\HH_i(X)=\ker(\bd_i)/\im(\bd_{i+1})$, and the \emph{$i^{th}$
(reduced) Betti number} $\betti_i(X)$ is the rank of the
largest free $R$-module summand of $\HH_i(X)$.

\subsection{Laplacians}

For $0\leq i\leq\dim X$,
define linear operators $\Lud_{X,i}$, $\Ldu_{X,i}$, $\Ltot_{X,i}$ on the
vector space $C_i(X)$ by
\begin{align*}
& \Lud_{X,i}  = \bd_{i+1}\cbd_{i+1}     && (\text{the \emph{up-down Laplacian}}),\\
& \Ldu_{X,i}  = \cbd_i\bd_i             && (\text{the \emph{down-up Laplacian}}),\\
& \Ltot_{X,i} = \Lud_{X,i} + \Ldu_{X,i} && (\text{the \emph{total Laplacian}}).
\end{align*}

Let $\sud_i(X)$,  $\sdu_i(X)$ and $\stot_i(X)$ denote the \emph{spectra} of
the respective Laplacians, that is, the multisets of their eigenvalues.
Each of these multisets has cardinality $|X_i|$ (counting multiplicities),
because the Laplacians can be represented by symmetric square matrices of that size. 
Instead of working with multisets, it is often convenient to record the eigenvalues
by the generating functions
\begin{align*}
\Eany_i(X;\q) &= \sum_{\lambda\in \sany_i(X)} \!\!\! \q^\lambda,\\
\Eany(X;\q,t) &= \sum_{i=0}^{\dim X}\sum_{\lambda\in \sany_i(X)} \!\!\! t^i \q^\lambda
\end{align*}
where $\bullet\in\{\text{ud},\text{du},\text{tot}\}$. 

The various Laplacian spectra are related by the identities
  \begin{subequations}
  \begin{align}
  \Eud_i(X;\q)  &\zeq \Edu_{i+1}(X;\q),\\
  \Etot_i(X;\q) &\zeq \Eud_i(X;\q) + \Edu_i(X;\q) = \Eud_i(X;\q) + \Eud_{i-1}(X;\q), \label{tot-from-uddu}\\
  \Eud_i(X;\q)  &\zeq \Etot_i(X;\q) - \Eud_{i-1}(X;\q), \label{ud-recurrence}
  \end{align}
  \end{subequations}
\cite[eqn.~(3.6)]{DR}, where $f\zeq g$ means that the two polynomials
$f,g$ are equal up to their constant coefficients.
It follows that
  \begin{equation} \label{from-total-to-ud}
  \Edu_{i+1}(X;\q) \zeq \Eud_i(X;\q) \zeq \sum_{j=0}^i (-1)^{i-j} \Etot_j(X;\q).
  \end{equation}
Therefore, each of the three families of generating funtions
  $$\{\Etot_i(X;\q) \st 0\leq i\leq\dim X\},\quad
    \{\Eud_i(X;\q) \st 0\leq i\leq\dim X\},\quad
    \{\Edu_i(X;\q) \st 0\leq i\leq\dim X\}$$
determines the other two.  (The constant coefficients, which represent
the multiplicity of the zero eigenvalue, can always be found by
observing that $\Eud_i(X,1)=\Edu_i(X,1)=\Etot_i(X,1)=|X_i|$.)

The bookkeeping differs slightly from simplicial complexes:
each cell in a cell complex has dimension $\geq0$, whereas every
simplicial complex includes the $(-1)$-dimensional face $\0$.
Therefore, for instance, $\Edu_0(X,q)\zeq0$ for all $X$.


\subsection{Product and sum formulas for the total Laplacian}
\label{product-sum}

For this section, it is convenient to regard the boundary map
of a cell complex $X$ as a linear endomorphism $\bd_X\colon C(X)\to C(X)$,
where $C(X)=\bigoplus_i C_i(X)$.  That is, $\bd_X=\sum_i\bd_{X,i}$;
equivalently, for each~$i$, $\bd_{X,i}$ is the restriction of $\bd_X$ to the group
$C_i(X)$ of cellular $i$-chains.

If $X$ and $Y$ are disjoint cell complexes, then
  $$\Etot(X\cup Y;\q,t)=\Etot(X;\q,t)+\Etot(Y;\q,t),$$
because the $k^{th}$
total boundary map (resp.,\ Laplacian) of $X\cup Y$ is just the
direct sum of the corresponding boundary maps (resp.,\ Laplacians)
of $X$ and $Y$.

Let $Z=X\x Y$.  Then
the set of $k$-cells of $Z$ is
  $$Z_k = \bigcup_{i=0}^k X_i \x Y_{k-i},$$
and on the level of chain groups we have
  $$C_k(Z) = \bigoplus_{i=0}^k C_i(X) \otimes C_{k-i}(Y).$$
The boundary map $\bd_Z$ is defined as follows.
Let $f\in X_i$ and $g\in Y_{k-i}$.  Then $(f,g)\in Z_k$, and the boundary
map acts on the corresponding cellular chain $[f]\otimes[g]\in C_k(Z)$ by
$\dim f=i$ and $\dim g=k-i$, 
  $$\bd_Z([f]\otimes[g]) = \bd_X[f]\otimes[g] + (-1)^{\dim f}[f]\otimes\bd_Y[g].$$
More simply, we may write
  $$\bd_Z  = \sum_{i=0}^k \bd_X\otimes\id + (-1)^i\id\otimes\bd_Y,\qquad
    \cbd_Z = \sum_{i=0}^k \cbd_X\otimes\id + (-1)^i\id\otimes\cbd_Y.$$
From this we calculate
  \begin{align*}
  \Lud_Z ~=~ \bd_Z\cbd_Z
  &= (-1)^{i-1} \bd_X\otimes\cbd_Y + \Lud_X\otimes\id + \id\otimes\Lud_Y + (-1)^i \cbd_X\otimes\bd_Y,\\
  \Ldu_Z ~=~ \cbd_Z\bd_Z
  &= (-1)^i \bd_X\otimes\cbd_Y + \Ldu_X\otimes\id + \id\otimes\Ldu_Y + (-1)^{i-1} \cbd_X\otimes\bd_Y,\\
  \Ltot_Z &= \Ltot_X\otimes\id + \id\otimes\Ltot_Y.
  \end{align*}
That is, the matrix for $\Ltot_Z$ is block-diagonal, with blocks
  $$\Ltot_{X,0}\otimes\id + \id\otimes\Ltot_{Y,k}, \qquad
    \Ltot_{X,1}\otimes\id + \id\otimes\Ltot_{Y,k-1}, \qquad\dots,\quad
    \Ltot_{X,k}\otimes\id + \id\otimes\Ltot_{Y,0}.$$
Therefore,
  $$\stot_k(Z) = \{\lambda+\mu \st \lambda\in\stot_i(X),\ \mu\in\stot_j(Y),\ i+j=k\}$$
as multisets.  From this we obtain the following product formula.

\begin{theorem}
\label{product-laplacian}
Let $X,Y$ be cell complexes.  Then
  $$\Etot(X\x Y;\q,t) = \Etot(X;\q,t)\Etot(Y;\q,t).$$
\end{theorem}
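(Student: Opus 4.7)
The preceding discussion has essentially completed the mathematical content, so the plan is mostly to package the multiset identity
$$\stot_k(Z) = \{\lambda+\mu \st \lambda\in\stot_i(X),\ \mu\in\stot_j(Y),\ i+j=k\}$$
into the claimed product of bivariate generating functions.

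First I would justify the multiset identity itself (which the excerpt asserts immediately before Theorem~\ref{product-laplacian}): the matrix for $\Ltot_Z$ restricted to $C_k(Z)=\bigoplus_{i+j=k} C_i(X)\otimes C_j(Y)$ is a direct sum of blocks of the form $\Ltot_{X,i}\otimes\id+\id\otimes\Ltot_{Y,j}$. Since $\Ltot_X$ and $\Ltot_Y$ are symmetric, they are diagonalizable by orthonormal eigenbases; if $\bd_X v=\lambda v$ and $\Ltot_Y w=\mu w$, then $v\otimes w$ is an eigenvector of $\Ltot_{X,i}\otimes\id+\id\otimes\Ltot_{Y,j}$ with eigenvalue $\lambda+\mu$, and such tensor products form a basis of $C_i(X)\otimes C_j(Y)$. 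This identifies the multiset of eigenvalues of each block and hence of $\Ltot_{Z,k}$.

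Next I would translate into generating functions. For each fixed $k$,
$$\Etot_k(Z;\q)=\sum_{i+j=k}\ \sum_{\lambda\in\stot_i(X)}\ \sum_{\mu\in\stot_j(Y)} \q^{\lambda+\mu}
=\sum_{i+j=k}\Etot_i(X;\q)\,\Etot_j(Y;\q).$$
Multiplying by $t^k$ and summing over $k\geq 0$ recognizes the right-hand side as the Cauchy product of $\Etot(X;\q,t)$ and $\Etot(Y;\q,t)$, yielding the claim.

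There is no serious obstacle here; the only subtlety worth being careful about is the symmetry of the Laplacians (so that eigenbases really exist and tensor products give a full eigenbasis with eigenvalue $\lambda+\mu$), and the block decomposition of $\Ltot_Z$ with respect to the grading $C_k(Z)=\bigoplus_{i+j=k}C_i(X)\otimes C_j(Y)$, both of which are already in hand from the paragraph preceding the theorem.
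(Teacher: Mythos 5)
Your proposal is correct and follows essentially the same route as the paper: the block decomposition $\Ltot_{Z,k}=\bigoplus_{i+j=k}\bigl(\Ltot_{X,i}\otimes\id+\id\otimes\Ltot_{Y,j}\bigr)$, the resulting multiset identity for $\stot_k(Z)$, and the translation into the Cauchy product of generating functions are exactly the content of the paragraph preceding the theorem, from which the paper deduces the statement directly. (One typo: ``$\bd_X v=\lambda v$'' should read ``$\Ltot_X v=\lambda v$''.)
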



\subsection{Spanning trees}

\begin{definition} \label{def_sst}
Let $X^d$ be a cell complex, and let $k\le d$.
A \emph{cellular spanning $k$-tree} (for short, CST or $k$-CST) of
$X$ is a $k$-dimensional subcomplex $Y\subseteq X$ such that
$Y_{(k-1)} = X_{(k-1)}$ and
  \begin{subequations}
  \begin{align}
  \label{acyc-condn}  & \HH_k(Y) = 0,\\
  \label{conn-condn}  & |\HH_{k-1}(Y)| < \infty, \quad\text{and}\\
  \label{count-condn} & |Y_k| = |X_k| - \betti_k(X) + \betti_{k-1}(X).
  \end{align}
  \end{subequations}

We write $\CST_k(X)$ for the set of all $k$-CST's of $X$,
sometimes omitting the subscript if $k=d$.
Note that $\CST_k(X)=\CST_k(X_{(j)})$ for all $j\geq k$.  
\end{definition}

A 0-dimensional CST is just a vertex of $X$.
If $X$ is a connected 1-dimensional cell complex---that is, a
connected graph---then
the definition of 1-CST coincides with the usual definition of spanning tree:
a spanning subgraph of $X$ that is acyclic, connected, and has one
fewer edge than the number of vertices in~$X$.

\begin{proposition} \label{two-out-of-three}
Let $Y\subseteq X$ be a $k$-dimensional subcomplex with $Y_{(k-1)}=X_{(k-1)}$.
Then any two of the conditions \eqref{acyc-condn}, \eqref{conn-condn},
\eqref{count-condn} together imply the third.
\end{proposition}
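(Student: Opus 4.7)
The plan is to reformulate each of the three conditions as a statement about the rational (reduced) Betti numbers $\betti_{k-1}(Y)$ and $\betti_k(Y)$, and then observe that any two of those numerical conditions force the third. Since all three conditions depend only on the $k$-skeleton, I would take $X$ to have dimension exactly $k$ (equivalently, replace $X$ by $X_{(k)}$); this makes the count condition \eqref{count-condn} behave as expected without carrying around spurious contributions from higher-dimensional cells of~$X$.

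First, \eqref{acyc-condn} is equivalent to $\betti_k(Y)=0$: since $Y$ has no $(k+1)$-cells, $\HH_k(Y;\Zz)=\ker\bd_k^Y$ is a subgroup of the free abelian group $C_k(Y)$, hence torsion-free, and so vanishes iff its free rank does. Similarly, \eqref{conn-condn} is equivalent to $\betti_{k-1}(Y)=0$: as $Y$ is finite, $\HH_{k-1}(Y;\Zz)$ is finitely generated, and such a group is finite precisely when its free rank is zero. For \eqref{count-condn} I would compute $\chi(X)-\chi(Y)$ in two ways. Because $Y_{(k-1)}=X_{(k-1)}$, $\dim X=\dim Y=k$, and the chain complexes agree below dimension~$k$ (so $\HH_i(Y)\isom\HH_i(X)$ for $i<k-1$), only dimensions $k-1$ and $k$ contribute, giving
$$(-1)^k\bigl(|X_k|-|Y_k|\bigr)=(-1)^{k-1}\bigl(\betti_{k-1}(X)-\betti_{k-1}(Y)\bigr)+(-1)^k\bigl(\betti_k(X)-\betti_k(Y)\bigr),$$
which rearranges to
$$|Y_k|-\bigl(|X_k|-\betti_k(X)+\betti_{k-1}(X)\bigr)=\betti_k(Y)-\betti_{k-1}(Y).$$
Thus \eqref{count-condn} is equivalent to $\betti_k(Y)=\betti_{k-1}(Y)$.

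With these equivalences in place the proposition becomes the trivial observation that, among the three numerical conditions $\betti_k(Y)=0$, $\betti_{k-1}(Y)=0$, and $\betti_k(Y)=\betti_{k-1}(Y)$, any two imply the third. I do not expect any real obstacle; the main thing is to argue the reductions to Betti-number conditions carefully --- in particular the torsion-freeness of $\ker\bd_k^Y$ needed for \eqref{acyc-condn} --- and to be consistent about reduced versus unreduced Betti numbers in the Euler-characteristic computation (the constant $1$ in $\chi=1+\sum(-1)^i\betti_i$ cancels cleanly in the difference $\chi(X)-\chi(Y)$).
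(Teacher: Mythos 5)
Your proof is correct and is essentially the argument the paper invokes (by citing \cite[Prop.~3.5]{DKM}): translate each of \eqref{acyc-condn}, \eqref{conn-condn}, \eqref{count-condn} into a statement about $\betti_k(Y)$ and $\betti_{k-1}(Y)$ --- using torsion-freeness of top homology for the first, finite generation for the second, and an Euler-characteristic comparison for the third --- and then note that any two of the resulting numerical conditions imply the third. Your decision to read $\betti_k(X)$ in \eqref{count-condn} as computed in the $k$-skeleton is the right (indeed necessary) interpretation, consistent with the paper's convention that $\CST_k(X)=\CST_k(X_{(j)})$ for $j\geq k$.
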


The proof is identical to that of \cite[Prop.~3.5]{DKM}.

\begin{definition}
A cell complex $X$ is \emph{acyclic in positive codimension}, or APC for
short, if $\betti_j(X)=0$ for all $j<\dim X$.
\end{definition}

As in the setting of simplicial complexes, APC-ness is the ``right'' analogue
of connectedness for graphs, in the following sense.

\begin{proposition} \label{metaconn}
A cell complex has a cellular spanning tree if and only if it is APC.
\end{proposition}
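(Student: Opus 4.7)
The plan is to analyze ranks of the top boundary map $\partial_d$ over $\Qq$, handling both implications by essentially the same linear-algebra argument that underlies the classical Matrix-Tree Theorem.

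For $(\Leftarrow)$, assume $X$ is APC. The hypothesis $\betti_{d-1}(X) = 0$ gives $\ker(\partial_{d-1})\otimes\Qq = \im(\partial_d)\otimes\Qq$, so $\rank(\partial_d^X) = |X_d| - \betti_d(X)$. I would then choose $T \subseteq X_d$ of this cardinality so that the columns of $\partial_d^X$ indexed by $T$ are $\Qq$-linearly independent, and set $Y := X_{(d-1)} \cup T$. Then $\ker(\partial_d^Y) = 0$ (since $\Qq$-independence forces $\Zz$-independence), yielding \eqref{acyc-condn}. Moreover $\im(\partial_d^Y)\otimes\Qq = \im(\partial_d^X)\otimes\Qq = \ker(\partial_{d-1})\otimes\Qq$, so $\HH_{d-1}(Y;\Qq)=0$ and hence $\HH_{d-1}(Y)$ is torsion, in particular finite, yielding \eqref{conn-condn}. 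Condition \eqref{count-condn} then comes for free from Proposition~\ref{two-out-of-three}.

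For $(\Rightarrow)$, suppose $Y \in \CST(X)$. Conditions \eqref{acyc-condn} and \eqref{count-condn} together give $\rank(\partial_d^Y) = |Y_d| = |X_d| - \betti_d(X) + \betti_{d-1}(X)$, while one automatically has $\rank(\partial_d^X) = |X_d| - \betti_d(X)$. Since $Y_d \subseteq X_d$ implies $\rank(\partial_d^Y) \leq \rank(\partial_d^X)$, we are forced into $\betti_{d-1}(X) = 0$. For $j < d - 1$, the equality of $(j+1)$-skeletons $Y_{(j+1)} = X_{(j+1)}$ (which sits inside $Y_{(d-1)} = X_{(d-1)}$) gives $\HH_j(X) \cong \HH_j(Y)$, and propagating the rank argument inductively down the skeleton filtration yields $\betti_j(X) = 0$ as well.

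I expect the main obstacle to be the propagation to $j < d-1$ in the forward direction: the top-codimension case is a clean one-line rank comparison, but extending it through the skeleton filtration requires handling the recursive structure of spanning trees with care. The reverse direction, by contrast, should be essentially a single application of basic linear algebra once the rank identity from APC is in hand.
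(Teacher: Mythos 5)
Your $(\Leftarrow)$ argument is correct and coincides with the paper's: the proof of Proposition~\ref{metaconn} is by reference to \cite[Prop.~3.7]{DKM}, which constructs $Y=X_{(d-1)}\cup T$ from a set $T$ of columns of $\partial_d$ forming a $\Qq$-basis of the column space, checks \eqref{acyc-condn} and \eqref{conn-condn} exactly as you do, and gets \eqref{count-condn} from Proposition~\ref{two-out-of-three}. (One presentational quibble: $\rank_{\Qq}\partial_d=|X_d|-\betti_d(X)$ holds for any complex by rank--nullity, since $\HH_d(X;\Qq)=\ker(\partial_d\otimes\Qq)$; the hypothesis $\betti_{d-1}(X)=0$ is what you actually need for the step $\im(\partial_d)\otimes\Qq=\ker(\partial_{d-1})\otimes\Qq$.) Your rank comparison in the $(\Rightarrow)$ direction likewise correctly yields $\betti_{d-1}(X)=0$.

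The genuine gap is the step you flag yourself: the ``propagation'' to $j<d-1$. There is no recursive structure to propagate along. Definition~\ref{def_sst} constrains only $\HH_d(Y)$ and $\HH_{d-1}(Y)$; it imposes nothing on $\HH_j(Y)$ for $j\le d-2$, and a spanning $d$-tree does not restrict to spanning trees of the lower skeleta. The isomorphism $\HH_j(X)\cong\HH_j(Y)$ for $j\le d-2$ (true, since the two complexes have the same cells in all dimensions $\le d-1$) only transfers the question to $Y$, where the spanning-tree axioms are silent. In fact no argument can close this gap from the stated hypotheses: let $X$ be a triangulated $2$-sphere together with one disjoint vertex; deleting a single $2$-cell yields a subcomplex satisfying \eqref{acyc-condn}, \eqref{conn-condn}, and \eqref{count-condn}, yet $\betti_0(X)=1$. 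So your rank argument delivers exactly $\betti_{d-1}(X)=0$ and no more; to get full APC-ness one needs either the extra hypothesis $\betti_j(X)=0$ for $j<d-1$ (harmless in the paper's applications, where APC is always assumed), or the stronger hypothesis that $X$ has a spanning $k$-tree for every $k\le d$, in which case running your codimension-one comparison on each skeleton $X_{(k)}$ separately does finish the proof. As written, however, the final inductive step of your $(\Rightarrow)$ direction asserts something that does not follow.
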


The proof is identical to that of \cite[Prop.~3.7]{DKM}.

\subsection{The Cellular Matrix-Tree Theorem}
\label{CMTT-section}

Throughout this section, let $X^d$ be an APC cell complex with $d\geq 1$.
Define
\begin{align*}
  \tau_k &=\tau_k(X) = \sum_{Y\in\CST_k(X)} |\HH_{k-1}(Y)|^2& \text{for } 0\leq k\leq d,\\
  \pi_k &=\pi_k(X) = \prod_{0\neq\lambda\in\sud_{k-1}(X)} \lambda& \text{for } 1\leq k\leq d.
\end{align*}
Observe that $\tau_0=|X_0|$, because a 0-dimensional CST is just a vertex.
In addition, we define
$$\pi_0=\pi_0(X)=|X_0|,\qquad \pi_{-1}=\pi_{-1}(X)=1.$$
While it might seem more consistent
to define $\pi_0=1$ (because $\sud_{-1}(X)=\0$), several subsequent enumeration
formulas (such as Corollary~\ref{thm:general-count} and
Theorem~\ref{count-colorful-trees}) can be stated much more conveniently with this convention
for~$\pi_0$.  (By
way of motivation, in the case of a \emph{simplicial} complex on $n$ vertices (see \cite{DKM}),
it is in fact true that $\pi_0=n$, because of the
presence of a $(-1)$-dimensional face; the Laplacian
$\Lud_{-1}$ is is not a void matrix, but rather the $1\x 1$ matrix with
single entry~$n$.)

Abbreviate $\betti_i=\betti_i(X)$ 
 and $\bdo=\bd_{X,d}$.
Let $T$ be a set of $d$-cells of $X$ of cardinality $|X_d|-\betti_d+\betti_{d-1}=|X_d|-\betti_d$,
and let $S$ be a set of $(d-1)$-cells such that $|S|=|T|$.  Define
$$
X_T = T \cup X_{(d-1)},\qquad
\bar{S} = X_{(d-1)}\setminus S,\qquad
X_{\bar{S}} = \bar{S} \cup X_{(d-2)},
$$
and let $\bd_{S,T}$ be the square submatrix of $\bdo$ with rows indexed by $S$ and columns
indexed by $T$.

\begin{proposition}  \label{nonsingular-criterion}
The matrix $\bd_{S,T}$ is nonsingular if and only if $X_T\in\CST_d(X)$ and $X_{\bar{S}}\in
\CST_{d-1}(X)$.
\end{proposition}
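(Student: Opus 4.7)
The plan is to view the square matrix $\bd_{S,T}$ as the composite
\[
\Qq^T \xrightarrow{\iota_T} C_d(X;\Qq) \xrightarrow{\bd_d} C_{d-1}(X;\Qq) \xrightarrow{\pi_S} \Qq^S,
\]
where $\iota_T$ is the inclusion and $\pi_S$ is the coordinate projection with kernel $\Qq^{\bar S}$, and to rephrase the two CST conditions as injectivity statements about $\bd_d|_T$ and $\bd_{d-1}|_{\bar S}$. The cell counts in Definition~\ref{def_sst} are already built into the choices of $|T|$ and $|\bar S|$: a short dimension count using APC (so that $\betti_{d-2}(X_{(d-1)})=0$ and $\betti_{d-1}(X_{(d-1)})=|X_d|-\betti_d=|T|$) verifies condition~\eqref{count-condn} for both $X_T$ and $X_{\bar S}$. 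Proposition~\ref{two-out-of-three} then collapses $X_T\in\CST_d(X)$ to $\HH_d(X_T)=0$, i.e.\ injectivity of $\bd_d|_T$, and $X_{\bar S}\in\CST_{d-1}(X)$ to $\HH_{d-1}(X_{\bar S})=0$, i.e.\ injectivity of $\bd_{d-1}|_{\bar S}$. Both kernels sit inside free chain groups, so it is enough to work over $\Qq$.

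For the forward direction, suppose $\bd_{S,T}$ is nonsingular. Any element of $\ker(\bd_d|_T)$ lies automatically in $\ker\bd_{S,T}$, so $\bd_d|_T$ is injective and $X_T\in\CST_d(X)$. For the $X_{\bar S}$ condition, APC gives $\ker\bd_{d-1}=\im\bd_d$ over $\Qq$, of dimension $|X_d|-\betti_d=|T|$; since $\bd_d|_T$ is injective with domain of this dimension, $\im(\bd_d|_T)=\im\bd_d=\ker\bd_{d-1}$. Hence
\[
\ker(\bd_{d-1}|_{\bar S})\;=\;\ker\bd_{d-1}\,\cap\,\Qq^{\bar S}\;=\;\im(\bd_d|_T)\,\cap\,\Qq^{\bar S},
\]
and this intersection is zero precisely because $\bd_{S,T}=\pi_S\circ\bd_d|_T$ is injective.

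For the converse, assume both $X_T\in\CST_d(X)$ and $X_{\bar S}\in\CST_{d-1}(X)$, so both restricted boundary maps are injective over $\Qq$. Given $c\in\ker\bd_{S,T}$, the vector $\bd_d c$ lies in $\Qq^{\bar S}$, and also in $\im\bd_d\subseteq\ker\bd_{d-1}$ (since $\bd_{d-1}\bd_d=0$); injectivity of $\bd_{d-1}|_{\bar S}$ then forces $\bd_d c=0$, and injectivity of $\bd_d|_T$ forces $c=0$. I anticipate the main obstacle to be the initial translation step: one has to marshal the cell-count hypotheses, APC, and Proposition~\ref{two-out-of-three} to reduce each CST condition to a single injectivity statement at the appropriate dimension. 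Once that bookkeeping is done, the linear-algebra core of the proof is the short chase above.
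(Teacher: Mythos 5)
Your proof is correct, and it takes a genuinely different route from the paper's. The paper interprets $\bd_{S,T}$ as the top boundary map of the relative complex $\Gamma=(X_T,X_{\bar S})$, identifies nonsingularity with $\HH_d(\Gamma)=0$, and reads both implications off the long exact sequence of the pair, invoking torsion-freeness of top homology to upgrade finiteness of $\HH_{d-1}(X_{\bar S})$ to vanishing. You instead factor $\bd_{S,T}=\pi_S\circ\bd_d\circ\iota_T$ and run a direct kernel/image chase over $\Qq$, after first using Proposition~\ref{two-out-of-three} and the cardinality hypotheses to reduce each CST condition to injectivity of the relevant restricted boundary map; your observation that the two homology groups in question are kernels of integer matrices, hence free, cleanly replaces the paper's torsion argument. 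The one step you compress --- the Euler-characteristic count showing $\betti_{d-2}(X_{(d-1)})=0$ and $\betti_{d-1}(X_{(d-1)})=|X_d|-\betti_d(X)$, which you need to verify condition~\eqref{count-condn} for $X_{\bar S}$ --- is exactly the content of the paper's Lemma~\ref{right-size}, proved separately there for use in Theorem~\ref{thm:CMTT}(2), and is worth writing out in full. What your approach buys is elementarity (no relative homology, no exact sequences) and a transparent view of where APC enters, namely in the equality $\ker\bd_{d-1}=\im\bd_d$ and its dimension count. What the paper's approach buys is reuse: the same relative complex $\Gamma$ and the same long exact sequence immediately yield the determinant formula of Proposition~\ref{detD-formula}, which your linear-algebra setup does not produce on its own.
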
 

\begin{proof}
We may regard $\bd_{S,T}$ as the top boundary map of the $d$-dimensional
relative complex $\Gamma =(X_T,X_{\bar{S}})$.  So $\bd_{S,T}$ is nonsingular
if and only if $\HH_d(\Gamma)=0$.  Consider the long exact sequence
  \begin{equation} \label{long-exact}
  0 \to \HH_d(X_{\bar{S}}) \to \HH_d(X_T) \to \HH_d(\Gamma)
    \to \HH_{d-1}(X_{\bar{S}}) \to \HH_{d-1}(X_T) \to \HH_{d-1}(\Gamma)
    \to \cdots .
  \end{equation}
If $\HH_d(\Gamma)\neq 0$, then $\HH_d(X_T)$ and $\HH_{d-1}(X_{\bar{S}})$
cannot both be zero.  This proves the ``only if'' direction.
 
If $\HH_d(\Gamma)=0$, then $\HH_d(X_{\bar{S}})=0$ (since $\dim X_{\bar{S}} = d-1$), so
\eqref{long-exact} implies $\HH_d(X_T)=0$.  Therefore $X_T$ is a $d$-tree,
because it has the correct number of facets.  Hence $\HH_{d-1}(X_T)$ is finite.
Then \eqref{long-exact} implies that $\HH_{d-1}(X_{\bar{S}})$ is finite.  In fact, it is zero
because the top homology group of any complex must be
torsion-free.  Meanwhile, $X_{\bar{S}}$ has the correct number of facets for a
$(d-1)$-CST of $X$, proving the ``if'' direction.
\end{proof}

\begin{proposition} \label{detD-formula}
If $\bd_{S,T}$ is nonsingular, then
  $$|\det \bd_{S,T}| = 
    \frac{|\HH_{d-1}(X_T)| \cdot |\HH_{d-2}(X_{\bar{S}})|}{ |\HH_{d-2}(X_T)|} = 
    \frac{|\HH_{d-1}(X_T)| \cdot |\HH_{d-2}(X_{\bar{S}})|}{ |\HH_{d-2}(X)|}.$$
\end{proposition}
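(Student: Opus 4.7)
The plan is to compute $|\det \bd_{S,T}|$ via the relative chain complex $\Gamma=(X_T,X_{\bar{S}})$ and then extract the claimed ratio from the long exact sequence \eqref{long-exact} already set up in the proof of Proposition~\ref{nonsingular-criterion}. First I would observe that since $X_T$ and $X_{\bar{S}}$ agree in all dimensions $\leq d-2$ and $X_{(d-1)}\setminus\bar{S}=S$, the relative chain complex has $C_d(\Gamma)=\Zz^T$, $C_{d-1}(\Gamma)=\Zz^S$, and $C_i(\Gamma)=0$ for $i\leq d-2$, with top boundary map exactly $\bd_{S,T}$. Nonsingularity of $\bd_{S,T}$ then gives $\HH_d(\Gamma)=0$ and $\HH_i(\Gamma)=0$ for $i\leq d-2$, while $\HH_{d-1}(\Gamma)=\Zz^S/\im(\bd_{S,T})$ is finite of order exactly $|\det \bd_{S,T}|$ by the Smith normal form / cokernel interpretation of the determinant.

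Next I would exploit the long exact sequence~\eqref{long-exact}, plugging in the vanishing results: $\HH_d(X_{\bar{S}})=0$ (because $\dim X_{\bar{S}}=d-1$), $\HH_d(X_T)=0$ and $\HH_{d-1}(X_{\bar{S}})=0$ (from the ``if'' direction of Proposition~\ref{nonsingular-criterion}), and $\HH_{d-2}(\Gamma)=0$. What remains of the sequence is
\begin{equation*}
0 \longrightarrow \HH_{d-1}(X_T) \longrightarrow \HH_{d-1}(\Gamma) \longrightarrow \HH_{d-2}(X_{\bar S}) \longrightarrow \HH_{d-2}(X_T) \longrightarrow 0.
\end{equation*}
Breaking this into two short exact sequences via the kernel $K$ of the last map, and using that all four of these homology groups are finite (by the CST hypotheses and because $X$ is APC), multiplicativity of order in short exact sequences yields
$$|\HH_{d-1}(\Gamma)|\cdot|\HH_{d-2}(X_T)| \;=\; |\HH_{d-1}(X_T)|\cdot|\HH_{d-2}(X_{\bar S})|,$$
which is precisely the first claimed equality after substituting $|\HH_{d-1}(\Gamma)|=|\det \bd_{S,T}|$.

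For the second equality I would note that $X_T$ and $X$ share the same $(d-1)$-skeleton; since $\HH_{d-2}$ of a complex depends only on its $(d-1)$-skeleton, $\HH_{d-2}(X_T)=\HH_{d-2}(X)$. The main obstacle is really just a bookkeeping one: making sure all groups appearing in the exact sequence are finite so that the multiplicativity of orders is legitimate. This finiteness follows by combining the APC hypothesis on $X$ (which gives torsion $\HH_{d-2}(X)$), the conditions defining a spanning tree (giving finite $\HH_{d-1}(X_T)$ and $\HH_{d-2}(X_{\bar S})$), and the identification $\HH_{d-2}(X_T)=\HH_{d-2}(X)$ just noted; once these are in hand the argument is a direct mimic of the corresponding simplicial result from \cite{DKM}.
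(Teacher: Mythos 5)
Your proposal is correct and follows essentially the same route as the paper's own proof: interpret $\bd_{S,T}$ as the top boundary map of the relative complex $\Gamma=(X_T,X_{\bar S})$, identify $|\det\bd_{S,T}|$ with $|\HH_{d-1}(\Gamma)|$ via the cokernel of a nonsingular integer matrix, and extract the identity from the four-term piece of the long exact sequence using multiplicativity of orders. Your added bookkeeping on finiteness and the skeleton argument for $\HH_{d-2}(X_T)=\HH_{d-2}(X)$ just makes explicit what the paper leaves implicit.
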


\begin{proof}
As before, we interpret $\bd_{S,T}$ as the boundary map of the relative complex
$\Gamma=(X_T,X_{\bar{S}})$.  So $\bd_{S,T}$ is a map from $\Zz^{|T|}$ to
$\Zz^{|T|}$, and $\Zz^{|T|} / \bd_{S,T}(\Zz^{|T|})$ is a finite abelian group
of order $|\det\bd_{S,T}|$.  On the other hand, since $\Gamma$ has no faces
of dimension $\leq d-2$, its lower boundary maps are
all zero, so $|\det\bd_{S,T}| = |\HH_{d-1}(\Gamma)|$.  Since  $\HH_{d-2}(X_T)$ is finite,
the desired result now follows from the piece
  \begin{equation} \label{long-exact:2}
  0 \to \HH_{d-1}(X_T) \to \HH_{d-1}(\Gamma) \to \HH_{d-2}(X_{\bar{S}}) 
\to \HH_{d-2}(X_T) \to 0
  \end{equation}
of the long exact sequence \eqref{long-exact}.
\end{proof}

We now can state our main result connecting cellular spanning tree
enumeration with Laplacian eigenvalues.

\begin{theorem}[Cellular Matrix-Tree Theorem]
\label{thm:CMTT}
Let $d\geq 1$ and let $X^d$ be an APC cell complex.  Then:
\begin{enumerate}
\item We have
  $$\pi_d(X) = \frac{\tau_d(X) \tau_{d-1}(X)}{|\HH_{d-2}(X)|^2}.$$
\item Suppose that $d>0$.  Let $L=\Lud_{X,d-1}$,
let $U$ be the set of facets of a $(d-1)$-CST of $X$,
and let $L_U$ denote the reduced Laplacian obtained by deleting the rows
and columns of $L$ corresponding to $U$. 
Then 
  $$\tau_d(X) = \frac{|\HH_{d-2}(X)|^2}{|\HH_{d-2}(X_U)|^2} \det L_U.$$
\end{enumerate}
\end{theorem}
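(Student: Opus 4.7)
The plan is to prove part (2) first as a direct Cauchy--Binet computation built on Propositions~\ref{nonsingular-criterion} and~\ref{detD-formula}, and then deduce part (1) by summing the identity from (2) over all $(d-1)$-CSTs, using the standard linear-algebra fact that the product of nonzero eigenvalues of a symmetric positive semidefinite matrix equals the sum of its top-sized principal minors.

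For part (2), let $\bar U=X_{d-1}\sm U$, and let $B$ be the submatrix of $\bd=\bd_d$ with rows indexed by $\bar U$. Then $L_U=BB^T$ is the principal submatrix of $L=\bd\cbd$ on the rows and columns indexed by $\bar U$. Using APC-ness together with the cardinality condition for a $(d-1)$-CST, I would check that $|\bar U|=|X_{d-1}|-|U|=\rank\bd_d=|X_d|-\betti_d(X)$, which is exactly the size of the facet set of any $d$-CST. Cauchy--Binet then gives
$$\det L_U \;=\; \sum_{T\subseteq X_d,\ |T|=|\bar U|} (\det \bd_{\bar U,T})^2.$$
Since $X_U$ is a $(d-1)$-CST by hypothesis, Proposition~\ref{nonsingular-criterion} says the summand is nonzero exactly when $X_T\in\CST_d(X)$. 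For such $T$, Proposition~\ref{detD-formula} yields $|\det\bd_{\bar U,T}|^2 = |\HH_{d-1}(X_T)|^2|\HH_{d-2}(X_U)|^2 / |\HH_{d-2}(X)|^2$. Summing and recognizing the definition of $\tau_d(X)$ gives the formula in (2).

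For part (1), since $L=\bd\cbd$ is PSD, the identity $\pi(L)=\sum_{|S|=\rank L}\det L_S$ (obtained by comparing the two expansions of $\det(\lambda I-L)$ and reading off the coefficient of $\lambda^{|X_{d-1}|-\rank L}$) gives
$$\pi_d(X)\;=\;\sum_{S\subseteq X_{d-1},\ |S|=\rank L} \det L_S.$$
By Proposition~\ref{nonsingular-criterion} applied in both index directions, any $S$ for which $X_{\bar S}$ is not a $(d-1)$-CST forces every minor $\bd_{S,T}$ to be singular, so $\det L_S=0$. Reindexing the surviving terms by $U=\bar S\in\CST_{d-1}(X)$ and invoking part (2) gives
$$\pi_d(X)\;=\;\sum_{X_U\in\CST_{d-1}(X)} \frac{|\HH_{d-2}(X_U)|^2}{|\HH_{d-2}(X)|^2}\,\tau_d(X)\;=\;\frac{\tau_d(X)\,\tau_{d-1}(X)}{|\HH_{d-2}(X)|^2},$$
by the definition of $\tau_{d-1}(X)$.

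The principal obstacle is bookkeeping rather than any deep step: one must carefully verify, using APC and the cardinality condition for CSTs, that the sizes $|\bar U|$, $\rank\bd_d$, and $|X_d|-\betti_d(X)$ all agree, so that Cauchy--Binet produces square minors indexable by facet sets of $d$-CSTs; and one must track the ratios of torsion orders $|\HH_{d-2}(X_U)|/|\HH_{d-2}(X)|$ from Proposition~\ref{detD-formula} so that they combine cleanly with $\tau_d$ and $\tau_{d-1}$. A secondary subtlety is the vanishing of $\det L_S$ for $S$ whose complement is not a $(d-1)$-CST, but this follows cleanly from the Cauchy--Binet representation together with the ``only if'' direction of Proposition~\ref{nonsingular-criterion}.
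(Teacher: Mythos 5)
Your proposal is correct and follows essentially the same route as the paper: both rest on the principal-minor expansion of $\pi_d$, the Binet--Cauchy formula, and Propositions~\ref{nonsingular-criterion} and~\ref{detD-formula}, with your size check $|\bar U|=|X_d|-\betti_d(X)$ playing the role of Lemma~\ref{right-size}. The only difference is organizational --- you deduce part (1) by summing the identity of part (2) over all $(d-1)$-CSTs, whereas the paper runs the double sum over $S$ and $T$ directly --- but the underlying computation is identical.
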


\begin{proof}[Proof of Theorem~\ref{thm:CMTT}~(1)]
The Laplacian $L$ is a $|X_{d-1}|$ by $|X_{d-1}|$ square matrix.  Since $X$ is APC, we have $\rank L=|X_d|-\betti_d=|X_d|-\betti_d+\betti_{d-1}$.
Let $\chi(L;y)=\det(yI-L)$ be
the characteristic polynomial of $L$ (where $I$ is an identity matrix of the same size), so that $\pi_d(X)$ is given up to sign by the coefficient
of $y^{|X_{d-1}|-|X_d|+\betti_d}$ in $\chi(L;y)$.  Equivalently,
  \begin{equation} \label{pi-formula}
  \pi_d ~= \sum_{\substack{S\subseteq X_{d-1}\\ |S|=\rank L}}\!\!\!\!\! \det L_U
        ~= \sum_{\substack{S\subseteq X_{d-1}\\ |S|=|X_d|-\betti_d}}\!\!\!\!\!\det L_U
  \end{equation}
where $U=X_{d-1}\sm S$ in each summand.  By the Binet--Cauchy formula, we have
  \begin{equation} \label{LT-formula}
  \det L_U ~= \sum_{\substack{T\subseteq X_d\\ |T|=|S|}}\!\!(\det\bd_{S,T})(\det\cbd_{S,T})
           ~= \sum_{\substack{T\subseteq X_d\\ |T|=|S|}}\!\!(\det\bd_{S,T})^2.
  \end{equation}
Combining \eqref{pi-formula} and \eqref{LT-formula}, applying
Proposition~\ref{nonsingular-criterion}, and interchanging the sums, we obtain
  $$\pi_d = \sum_{T:X_T\in\CST_d(X)}\ \ \sum_{S:X_{\bar{S}}\in\CST_{d-1}(X)} (\det\bd_{S,T})^2$$
and now applying Proposition~\ref{detD-formula} yields
  \begin{align*}
    \pi_d &=  \sum_{T:X_T\in\CST_d(X)}\ \ \sum_{S:X_{\bar{S}}\in\CST_{d-1}(X)}
    \left(\frac{|\HH_{d-1}(X_T)|\cdot|\HH_{d-2}(X_{\bar{S}})|}{|\HH_{d-2}(X)|}\right)^2\\\\
    &= \frac{ \left(\displaystyle\sum_{T:X_T\in\CST_d(X)} |\HH_{d-1}(X_T)|^2\right)
              \left(\displaystyle\sum_{S:X_{\bar{S}}\in\CST_{d-1}(X)} |\HH_{d-2}(X_{\bar{S}})|\right)}
            {|\HH_{d-2}(X)|^2}
  \end{align*}
as desired.
\end{proof}

In order to prove the ``reduced Laplacian'' part of Theorem~\ref{thm:CMTT},
we first need the following lemma.

\begin{lemma} \label{right-size}
Let $U$ be the set of facets of a $(d-1)$-CST of $X$, and let
$S = X_{d-1} \sm U$.
Then $|S|=|X_d|-\betti_d(X)$, the number of facets of a $d$-CST
of $X$.
\end{lemma}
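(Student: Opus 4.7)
The plan is to compute $|U|$ directly from the cardinality condition defining a $(d-1)$-CST and then use the APC hypothesis to simplify. By the remark following Definition~\ref{def_sst}, $\CST_{d-1}(X)=\CST_{d-1}(X_{(d-1)})$, so I would view $U$ as the facet set of a $(d-1)$-CST of $X_{(d-1)}$. Applying the cardinality condition~\eqref{count-condn} inside the complex $X_{(d-1)}$ gives $|U| = |X_{d-1}| - \betti_{d-1}(X_{(d-1)}) + \betti_{d-2}(X_{(d-1)})$. Since reduced homology in dimension $i$ depends only on the $(i+1)$-skeleton, the last term equals $\betti_{d-2}(X)$, which vanishes by APC.

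Next I would relate $\betti_{d-1}(X_{(d-1)})$ to $\betti_{d-1}(X)$ via a rank-nullity argument. Because $X_{(d-1)}$ has no $d$-cells, $\HH_{d-1}(X_{(d-1)}) = \ker \bdo_{d-1}$, while $\HH_{d-1}(X) = \ker \bdo_{d-1}/\im \bdo_d$; hence $\betti_{d-1}(X_{(d-1)}) = \betti_{d-1}(X) + \rank\bdo_d$. The first summand vanishes by APC, and since $X$ has no $(d+1)$-cells, $\HH_d(X)=\ker\bdo_d$, so $\rank\bdo_d = |X_d| - \rank\ker\bdo_d = |X_d| - \betti_d(X)$.

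Putting these ingredients together yields $|U| = |X_{d-1}| - |X_d| + \betti_d(X)$, and consequently $|S| = |X_{d-1}| - |U| = |X_d| - \betti_d(X)$, which by APC again ($\betti_{d-1}(X)=0$) equals $|X_d| - \betti_d(X) + \betti_{d-1}(X)$, the number of facets of a $d$-CST of $X$.

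I do not expect any substantive obstacle; the argument is essentially careful bookkeeping of ranks in the cellular chain complex of $X$, with the APC hypothesis successively killing the Betti numbers in dimensions $d-2$ and $d-1$. The one mild subtlety worth flagging is that the CST cardinality formula must be applied to $X_{(d-1)}$ rather than to $X$, so that the relevant Betti numbers on the right-hand side are those of $X_{(d-1)}$ (and in particular $\betti_{d-1}(X_{(d-1)})$ need not be zero, even though $\betti_{d-1}(X)=0$).
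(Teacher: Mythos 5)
Your proof is correct, and its first half coincides with the paper's: both apply the cardinality condition~\eqref{count-condn} to $U$ inside the skeleton $Y=X_{(d-1)}$, getting $|U|=|X_{d-1}|-\betti_{d-1}(Y)+\betti_{d-2}(Y)$, and both kill $\betti_{d-2}(Y)=\betti_{d-2}(X)$ by APC, reducing everything to showing $\betti_{d-1}(Y)=|X_d|-\betti_d(X)$. The two arguments differ only in how they prove that identity. You argue by rank--nullity on the single map $\bdo_d$: passing from $Y$ to $X$ changes $\HH_{d-1}$ only by quotienting out $\im\bdo_d$, so $\betti_{d-1}(Y)=\betti_{d-1}(X)+\rank\bdo_d$, and $\rank\bdo_d=|X_d|-\betti_d(X)$ since $\HH_d(X)=\ker\bdo_d$. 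The paper instead compares the Euler characteristics of $X$ and $Y$, whose difference is $(-1)^d|X_d|$ on the face-count side and involves only the top two Betti numbers on the homology side. These are really the same computation --- the Euler--Poincar\'e identity is rank--nullity summed over all dimensions --- so the choice is cosmetic; your version isolates exactly which boundary map accounts for the discrepancy, while the paper's avoids mentioning ranks of boundary maps at all. The only point worth making explicit in your write-up is that the Betti numbers are ranks of free parts of integral homology groups, so the additivity you use ($\betti_{d-1}(Y)=\betti_{d-1}(X)+\rank\bdo_d$, and $\rank\ker\bdo_d+\rank\bdo_d=|X_d|$) is rank-additivity for finitely generated abelian groups; equivalently, one may tensor with $\Qq$ throughout, since torsion does not affect any of the ranks involved.
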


\begin{proof}
Let $Y=X_{(d-1)}$; in particular,
  \begin{equation} \label{skeleton}
  |Y_\ell| = |X_\ell|\quad\text{for } \ell\leq d-1
  \qquad\text{and}\qquad
  \betti_\ell(Y) = \betti_\ell(X)\quad\text{for } \ell\leq d-2.
  \end{equation}
Therefore,
$|U|=|Y_{d-1}|-\betti_{d-1}(Y)+\betti_{d-2}(Y)=
|X_{d-1}|-\betti_{d-1}(Y)$, so $|S|=\betti_{d-1}(Y)$
by Proposition~\ref{two-out-of-three}.  Meanwhile, the Euler characteristics of
$X$ and $Y$ are
  \begin{align*}
  \chi(X) = \sum_{i=0}^d (-1)^i |X_i|     &= \sum_{i=0}^d (-1)^i \betti_i(X),\\
  \chi(Y) = \sum_{i=0}^{d-1} (-1)^i |Y_i| &= \sum_{i=0}^{d-1} (-1)^i \betti_i(Y).
  \end{align*}
By \eqref{skeleton}, we see that
  $$
  \chi(X)-\chi(Y)
    ~=~ (-1)^d |X_d|
    ~=~ (-1)^d \betti_d(X) + (-1)^{d-1}\betti_{d-1}(X) - (-1)^{d-1}\betti_{d-1}(Y)
  $$
from which we obtain
  $
  |X_d| = \betti_d(X) - \betti_{d-1}(X) + \betti_{d-1}(Y).
  $
Since $X$ is APC, we have $\betti_{d-1}(X)=0$, so
$|S| =\betti_{d-1}(Y) = |X_d| - \betti_d(X)$ as desired.
\end{proof}

\begin{proof}[Proof of Theorem~\ref{thm:CMTT}~(2)]
By the Binet--Cauchy formula, we have
$$
\det L_U = \sum_{T:\ |T|=|S|} (\det\bd_{S,T})(\det\cbd_{S,T})
= \sum_{T:\ |T|=|S|} (\det\bd_{S,T})^2.
$$

By Lemma~\ref{right-size} and Proposition~\ref{nonsingular-criterion}, $\bd_{S,T}$ is nonsingular exactly
when $X_T\in\CST_d(X)$.  Hence Proposition~\ref{detD-formula} gives
  \begin{align*}
  \det L_U &= \sum_{T:X_T \in \CST_d(X)}
             \left(\frac{|\HH_{d-1}(X_T)|\cdot|\HH_{d-2}(X_U)|}
             {|\HH_{d-2}(X)|}\right)^2 \\
           &= \frac{|\HH_{d-2}(X_U)|^2}{|\HH_{d-2}(X)|^2}
             \sum_{T:X_T\in\CST_d(X)} |\HH_{d-1}(X_T)|^2
           ~=~ \frac{|\HH_{d-2}(X_U)|^2}{|\HH_{d-2}(X)|^2} \tau_d(X),
  \end{align*}
which is equivalent to the desired formula.
\end{proof}

We will often work with complexes that are in fact $\Zz$-acyclic
in positive codimension (as opposed to merely $\Qq$-acyclic), and
whose Laplacians have nice forms.  In this case, the following formula
is often the most convenient way to obtain tree enumerators from
Laplacian eigenvalues.

\begin{corollary}
\label{alternating-product}
Let $X^d$ be a cell complex such that $\HH_i(X,\Zz)=0$ for all $i<d$.  Then,
for every $k\leq d$, we have
  $$\tau_k ~=~ \prod_{i=0}^k \pi_i^{(-1)^{k-i}}.$$
\end{corollary}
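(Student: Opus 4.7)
The plan is to reduce the statement to the recurrence $\pi_k(X) = \tau_k(X)\tau_{k-1}(X)$ for $1 \leq k \leq d$ by applying Theorem~\ref{thm:CMTT}~(1) to each skeleton $X_{(k)}$, and then to induct on $k$ starting from the base case $\tau_0 = \pi_0 = |X_0|$.

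First I would verify that Theorem~\ref{thm:CMTT}~(1) is applicable to the skeleton $X_{(k)}$ for each $1 \leq k \leq d$. For $j < k$, the homology group $\HH_j(X_{(k)})$ depends only on the chain groups $C_{j-1}, C_j, C_{j+1}$, which agree with those of $X$ since $j+1 \leq k \leq d$; consequently $\HH_j(X_{(k)}) = \HH_j(X) = 0$ by hypothesis. In particular $X_{(k)}$ is APC, and moreover $|\HH_{k-2}(X_{(k)})| = |\HH_{k-2}(X)| = 1$. Similarly, since $X$ and $X_{(k)}$ share the same cells in dimensions $k-1$ and $k$, we have $\Lud_{k-1}(X) = \Lud_{k-1}(X_{(k)})$, whence $\pi_k(X_{(k)}) = \pi_k(X)$; and $\CST_k(X_{(k)}) = \CST_k(X)$, $\CST_{k-1}(X_{(k)}) = \CST_{k-1}(X)$ by the remark following Definition~\ref{def_sst}, giving $\tau_k(X_{(k)}) = \tau_k(X)$ and $\tau_{k-1}(X_{(k)}) = \tau_{k-1}(X)$.

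Armed with these identifications, Theorem~\ref{thm:CMTT}~(1) applied to $X_{(k)}$ yields
\begin{equation*}
\pi_k(X) = \frac{\tau_k(X)\,\tau_{k-1}(X)}{|\HH_{k-2}(X_{(k)})|^2} = \tau_k(X)\,\tau_{k-1}(X)
\end{equation*}
for each $k$ with $1 \leq k \leq d$.

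Finally, I would prove the formula by induction on $k$. For $k = 0$ the convention $\pi_0 = |X_0|$ gives $\tau_0 = |X_0| = \pi_0 = \prod_{i=0}^{0} \pi_i^{(-1)^{0-i}}$, establishing the base case. For the inductive step, assuming $\tau_{k-1} = \prod_{i=0}^{k-1} \pi_i^{(-1)^{k-1-i}}$, the recurrence $\pi_k = \tau_k \tau_{k-1}$ gives
\begin{equation*}
\tau_k = \frac{\pi_k}{\tau_{k-1}} = \pi_k \prod_{i=0}^{k-1} \pi_i^{(-1)^{k-i}} = \prod_{i=0}^{k} \pi_i^{(-1)^{k-i}},
\end{equation*}
completing the proof. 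There is no real obstacle beyond the bookkeeping to ensure that the hypothesis $\HH_i(X,\Zz) = 0$ for $i < d$ transfers to each skeleton and trivializes the homological correction factor in the Cellular Matrix-Tree Theorem; once that is done the result is a direct telescoping induction.
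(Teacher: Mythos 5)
Your proof is correct and follows essentially the same route as the paper's: both reduce to the relation $\pi_k = \tau_k\tau_{k-1}$ obtained from Theorem~\ref{thm:CMTT}~(1) (the integral acyclicity hypothesis killing the factor $|\HH_{k-2}|^2$) and then telescope down to $\tau_0=\pi_0=|X_0|$. The only difference is that you spell out the routine verification that the theorem may be applied to each skeleton $X_{(k)}$ with all the relevant invariants unchanged, a step the paper leaves implicit in the phrase ``applying Theorem~\ref{thm:CMTT}~(1) repeatedly.''
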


\begin{proof}
Applying Theorem~\ref{thm:CMTT}~(1) repeatedly, we obtain
$$\tau_k ~=~ \frac{\pi_k}{\tau_{k-1}} ~=~ \frac{\pi_k}{\pi_{k-1}}\tau_{k-2} ~=~ \cdots
~=~ \left(\prod_{i=1}^k \pi_i^{(-1)^{k-i}}\right)\tau_0^{(-1)^k}
~=~ \prod_{i=0}^k \pi_i^{(-1)^{k-i}}.\eqno\qedhere
$$
\end{proof}

For later use (when we study duality in Section~\ref{sec:duality}), define
  $$\omega_k = \omega_k(X) = \prod_{0\neq\lambda\in\stot_k(X)} \lambda.$$
Note that equation~\eqref{tot-from-uddu} implies that
$\omega_k = \pi_k\pi_{k+1}$ for $k>0$.  Moreover, $\omega_0=\pi_1$.
 Solving for the $\pi$'s in terms of
the $\omega$'s gives
  $$\pi_i = \prod_{j=0}^{i-1} \omega_j^{(-1)^{i-j+1}}$$
and substituting this formula into Corollary~\ref{alternating-product} gives
\begin{align}
\tau_k
&~=~ \prod_{i=0}^k \prod_{j=0}^{i-1} \omega_j^{(-1)^{k-j+1}}
~=~ \prod_{j=0}^{k-1} \prod_{i=j+1}^k \omega_j^{(-1)^{k-j+1}}
~=~ \prod_{j=0}^{k-1} \omega_j^{\sum_{i=j+1}^k (-1)^{k-j+1}}\notag\\
&~=~ \prod_{j=0}^{k-1} \omega_j^{ (-1)^{k-j+1}(k-j)}.\label{tau-from-omega}
\end{align}

\subsection{The Weighted Cellular Matrix-Tree Theorem}
\label{WCMTT-section}

As before, let $d\geq 1$ and let $X^d$ be a cell complex that is APC.
Introduce an indeterminate $x_f$ for each $f\in X_d$, and let $X_f=x_f^2$.
For every $T\subseteq X_d$, let $x_T = \prod_{f\in T} x_f$ and let $X_T=x_T^2$.
To construct the \emph{weighted boundary matrix} $\wbd_{X,d}$, we
multiply each column of $\bd_{X,d}$ by $x_f$, where $f$ is the corresponding $d$-cell of $X$.
We can accordingly define weighted versions of the coboundary maps, Laplacians,
etc., of Section~\ref{CMTT-section}, as well as of the
invariants $\pi_k$ and $\tau_k$.  We will notate each weighted object by placing
a hat over the symbol for the corresponding unweighted quantity.  Thus
$\hat\pi_k$ is the product of the nonzero eigenvalues of $\Lwud_{X,k-1}$
(for $k\geq 1$), and
  $$\hat \tau_k = \hat \tau_k(X) = \sum_{Y\in\CST_k(X)} |\HH_{k-1}(Y)|^2 X_Y.$$
Meanwhile, any unweighted quantity can be recovered from its weighted analogue simply by setting $x_f=1$ for all $f\in X_d$.

\begin{proposition} \label{weighted-tools}
Let $T\subseteq X_d$ and $S\subseteq X_{d-1}$, with $|T|=|S|=|X_d|-\betti_d$.
Then $\det{\wbd}_{S,T} = x_T \det \bd_{S,T}$ is nonzero
if and only if $X_T\in\CST_d(X)$ and $X_{\bar S}\in\CST_{d-1}(X)$.  In that case,
  \begin{equation} \label{weighted-detD}
  \pm\det\wbd_{S,T} =
  \frac{|\HH_{d-1}(X_T)| \cdot |\HH_{d-2}(X_{\bar S})|}{ |\HH_{d-2}(X_T)|} x_T  = 
  \frac{|\HH_{d-1}(X_T)| \cdot |\HH_{d-2}(X_{\bar S})|}{ |\HH_{d-2}(X)|} x_T.
  \end{equation}
\end{proposition}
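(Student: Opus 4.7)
The plan is to leverage the fact that the weighted boundary matrix $\wbd_{S,T}$ differs from $\bd_{S,T}$ only by column scaling, which reduces the statement to a direct application of the unweighted results already established.

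First I would unpack the construction of $\wbd_{X,d}$: by definition it is obtained from $\bd_{X,d}$ by multiplying the column indexed by each $d$-cell $f$ by the indeterminate $x_f$. The restriction to the submatrix with rows in $S$ and columns in $T$ inherits exactly this column scaling. By multilinearity of the determinant in the columns, it follows immediately that
\[
\det\wbd_{S,T} \;=\; \Bigl(\prod_{f\in T} x_f\Bigr)\det\bd_{S,T} \;=\; x_T\det\bd_{S,T}.
\]

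Next, since $x_T$ is a monomial in the polynomial ring $\Zz[x_f \st f\in X_d]$ and hence is not a zero divisor, $\det\wbd_{S,T}$ vanishes if and only if $\det\bd_{S,T}$ does. Applying Proposition~\ref{nonsingular-criterion}, nonvanishing is therefore equivalent to the conditions $X_T\in\CST_d(X)$ and $X_{\bar S}\in\CST_{d-1}(X)$. When these conditions hold, Proposition~\ref{detD-formula} supplies the absolute value of $\det\bd_{S,T}$, and multiplying through by $x_T$ yields~\eqref{weighted-detD}. The second equality in~\eqref{weighted-detD}, which replaces $|\HH_{d-2}(X_T)|$ by $|\HH_{d-2}(X)|$, is inherited verbatim from Proposition~\ref{detD-formula} and reflects the fact that $X_T$ shares its $(d-1)$-skeleton with $X$.

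In truth there is no genuine obstacle here; the proposition is a routine ``weighted upgrade'' of Propositions~\ref{nonsingular-criterion} and~\ref{detD-formula}, achieved entirely through column rescaling. The only modest care required is to note the $\pm$ sign on the left-hand side of~\eqref{weighted-detD}, which appears because Proposition~\ref{detD-formula} controls only the absolute value of $\det\bd_{S,T}$, whereas the sign itself depends on orientation conventions and the chosen ordering of cells.
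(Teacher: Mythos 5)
Your proposal is correct and follows the same route as the paper, which simply observes that the first claim follows from Proposition~\ref{nonsingular-criterion} and the second from Proposition~\ref{detD-formula}; your additional remarks about column multilinearity and $x_T$ not being a zero divisor just make explicit what the paper leaves implicit.
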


\begin{proof}
The first claim follows from Proposition~\ref{nonsingular-criterion}, and the second
from Proposition~\ref{detD-formula}.
\end{proof}

It is now straightforward to adapt the proofs of both parts of
Theorem~\ref{thm:CMTT} to the weighted setting.

\begin{theorem}
\label{thm:WCMTT}
Let $d \geq 1$, let $X^d$ be an APC cell complex,
and let $\hat L=\Lwud_{X,d-1}$.  
\begin{enumerate}
\item We have
  $$\hat\pi_d(X) = \frac{\hat \tau_d(X) \tau_{d-1}(X)}{|\HH_{d-2}(X)|^2}.$$
\item Let $U$ be the set of facets of a $(d-1)$-CST of $X$, and let $\hat L_U$ be the reduced Laplacian obtained by
deleting the rows and columns of $\hat L$ corresponding to $U$.  Then
  $$\hat \tau_d(X) = \frac{|\HH_{d-2}(X)|^2}{|\HH_{d-2}(X_U)|^2} \det \hat L_U.$$
\end{enumerate}
\end{theorem}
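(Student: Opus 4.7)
My plan is to adapt the proof of Theorem~\ref{thm:CMTT} essentially verbatim, substituting $\wbd$ for $\bd$ throughout and using Proposition~\ref{weighted-tools} in place of Propositions~\ref{nonsingular-criterion} and~\ref{detD-formula}. The combinatorial skeleton of the unweighted argument transfers without change; only the polynomial bookkeeping of the indeterminates $x_f$ is new.

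For part~(1), I first observe that the weighted boundary has the form $\wbd = \bd D$, where $D$ is the diagonal operator on $C_d(X)$ sending $[f]\mapsto x_f[f]$. Over the field $\Qq(x_f\st f\in X_d)$ the matrix $D$ is invertible, so $\rank \hat L = \rank L = |X_d|-\betti_d$. Moreover $\wcbd = \wbd^T$, so Binet--Cauchy goes through unchanged. Expanding $\hat\pi_d$ as the appropriate coefficient of the characteristic polynomial of $\hat L$ yields
$$\hat\pi_d \;=\; \sum_{\substack{S\subseteq X_{d-1}\\|S|=|X_d|-\betti_d}} \det \hat L_U \;=\; \sum_{S,T}(\det\wbd_{S,T})^2,$$
with $U = X_{d-1}\sm S$ in each summand. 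Applying Proposition~\ref{weighted-tools} to the nonzero terms factors the double sum exactly as in the unweighted proof:
$$\hat\pi_d \;=\; \frac{1}{|\HH_{d-2}(X)|^2}\Bigl(\sum_{T\st X_T\in\CST_d(X)}|\HH_{d-1}(X_T)|^2\,X_T\Bigr)\Bigl(\sum_{S\st X_{\bar S}\in\CST_{d-1}(X)}|\HH_{d-2}(X_{\bar S})|^2\Bigr).$$
The first parenthesized sum is $\hat\tau_d(X)$ by definition, and the second is the \emph{unweighted} quantity $\tau_{d-1}(X)$, since no indeterminates were assigned to $(d-1)$-cells.

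For part~(2), I fix $U$ to be the set of facets of a given $(d-1)$-CST and put $S = X_{d-1}\sm U$. By Lemma~\ref{right-size}, $|S| = |X_d|-\betti_d$, and $\bar S = U$ is automatically a $(d-1)$-CST, so Binet--Cauchy together with Proposition~\ref{weighted-tools} restricts the sum to $T$ with $X_T\in\CST_d(X)$ and gives
$$\det\hat L_U \;=\; \frac{|\HH_{d-2}(X_U)|^2}{|\HH_{d-2}(X)|^2}\,\hat\tau_d(X),$$
which rearranges to the claimed formula.

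The only subtle new ingredient is the rank computation for $\hat L$ underlying part~(1); I handle it by passing to $\Qq(x_f)$ where $D$ is invertible, which ensures that the characteristic-polynomial identity is the correct replacement for its unweighted counterpart. I do not expect any further obstacles: the remainder is a line-by-line translation using Proposition~\ref{weighted-tools}.
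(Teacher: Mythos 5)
Your proposal is correct and follows exactly the route the paper itself takes: the paper's entire proof of Theorem~\ref{thm:WCMTT} is the remark that one adapts both parts of Theorem~\ref{thm:CMTT} verbatim using Proposition~\ref{weighted-tools}, which is precisely what you carry out (and your attention to the rank of $\hat L$ over $\Qq(x_f\st f\in X_d)$, plus the observation that the second factor is the unweighted $\tau_{d-1}(X)$, fills in the details correctly).
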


\section{Cubical complexes}
\label{sec:cubical}

We now specialize from arbitrary cell complexes to cubes and
cubical complexes.  We retain the notation and terminology of
Section~\ref{cellular-section} for cell complexes.

The \emph{$n$-cube} $Q_n$ is the face poset of the geometric $n$-cube $\tilde Q_n$,
the convex hull of the $2^n$ points in $\Rr^n$ whose coordinates are all $0$ or $1$.
We will usually blur the distinction between the polytope
$\tilde Q_n$ and its face poset $Q_n$.  The faces of $Q_n$ are ordered
$n$-tuples $f=(f_1,\dots,f_n)$, where $f_i\in\{0,1,\X\}$.
Intuitively, we think of $\X$ as the open interval $(0,1)$.
For example, the cell $(0,\X,1,\X)\in Q_4$ corresponds to the 2-cell
  $$\{0\} ~\x~ (0,1) ~\x~ \{1\} ~\x~ (0,1) ~\subset~ \tilde Q_4 ~\subset~\Rr^4.$$
The order relation in $Q_n$ is as follows:
$f\leq g$ iff $f_i\leq g_i$ for all $i\in[n]$,
where $0<\X$; $1<\X$; and $0,1$ are
incomparable.
(If necessary, we can regard $Q_n$ as containing a unique minimal element $\0$,
with undefined direction and dimension~$-1$.)

The \emph{direction} of a face~$f$ is defined as
$\dir(f) = \{i\in[n] \st f_i=\X\}$, so that
$\dim(f) = |\dir(f)|$.
Notice that $\dir(f)\subseteq\dir(g)$ whenever $f\leq g$,
although the converse is not true.
The poset $Q_n$ is ranked, with $\binom{n}{i}2^{n-i}$
faces of rank~$i$ for $0\leq i\leq n$.

A \emph{proper cubical complex} $X$ is an order ideal in $Q_n$.
This is a combinatorial object with a natural geometric realization $\tilde X$
as the union of the corresponding faces of the polytope $\tilde Q_n$.
Note that this is a much more restrictive definition of ``cubical complex''
than as simply a cell complex all of whose faces are combinatorial cubes.
The reason for working with this smaller class of cubical complexes
is that the cells and boundary maps can be described combinatorially, as we now
explain.

Let $f$ and $g$ be faces of $Q_n$ of dimensions $i-1$ and $i$
respectively.  If $f\leq g$, then we may write
  $\dir(g)=\{a_1,\dots,a_i\}$, $\dir(f)=\{a_1,\dots,\widehat{a_j},\dots,a_i\}$,
with both direction sets listed in increasing order.  Then the
relative orientation of the pair $f,g$ is
  $$\sign(f,g) = \begin{cases}
    (-1)^j & \text{ if } f_{a_j}=0,\\
    (-1)^{j+1} & \text{ if } f_{a_j}=1.
  \end{cases}$$
Meanwhile, if $f\not\leq g$, then we set $\sign(f,g)=0$.

Let $R$ be a coefficient ring (typically $\Zz$ or a field),
and let $C_i(X)=C_i(X,R)$ be the free abelian group 
on generators $[f]$ for $f\in X_i$.  The
\emph{$i^{th}$ cubical boundary map} of $X$ is
  \begin{align*}
  \bd_{X,i}:\ C_i(X) &\to C_{i-1}(X)\\
  [g] &\mapsto \sum_{f\in X_{i-1}} \sign(f,g) [f]
  \end{align*}
and the \emph{$i^{th}$ cubical coboundary map} of $X$ is
  \begin{align*}
  \cbd_{X,i}:\ C_{i-1}(X) &\to C_i(X)\\
  [f] &\mapsto \sum_{g\in X_i} \sign(f,g) [g].
  \end{align*}
As before, we define the \emph{$i^{th}$ up-down}, \emph{down-up}, and \emph{total cubical Laplacians} as
respectively
  $$
  \Lud_{X,i} = \bd_{X,i+1} \cbd_{X,i+1},\qquad
  \Ldu_{X,i} = \cbd_{X,i} \bd_{X,i},\qquad
  \Ltot_{X,i} = \Lud_{X,i}+\Ldu_{X,i}.
  $$

The map $\bd_{X,i}$ is in fact the cellular boundary map of $Q_n$ as a
cell complex; see \cite[\S4]{EhrHet}.  So the techniques of
Section~\ref{cellular-section} can be applied to count cubical
spanning trees. 

\begin{example}
\label{q1-unweighted}
A fundamental example is the complex $X=Q_1$,
with $X_0=\{0,1\}$, $X_1=\{\X\}$.  The
boundary, coboundary and Laplacian matrices of $X$ are
  \begin{align*}
  \bd_1 &= \begin{bmatrix} -1\\1\end{bmatrix}, & 
  \Lud_0 = \Ltot_0 &= \begin{bmatrix} 1 & -1\\-1 & 1\end{bmatrix},\\
  \cbd_1 &= \begin{bmatrix} -1 & 1\end{bmatrix}, &
  \Ldu_1 = \Ltot_1 &= \begin{bmatrix} 2\end{bmatrix},
  \end{align*}
and so its spectrum polynomial is
  \begin{equation} \label{spec:Q1}
  \Etot(Q_1;\q,t) = \sum_{i=0}^{\dim X}\sum_{\lambda\in\stot_i(X)} \!\!\! t^i \q^\lambda
    = 1+\q^2+t\q^2.
  \end{equation}
\end{example}

\subsection{Prisms}
\label{sec:prism}
We now consider the important \emph{prism} operation, which is the
cubical analogue of coning a simplicial complex.

\begin{definition} \label{defn:prism}
Let $X\subseteq Q_n$ be a proper cubical complex.  The \emph{prism}
over $X$ is the subcomplex of $Q_{n+1}$ defined by
  $$\Prism X = \left\{f=(f_1,\dots,f_{n+1})\in Q_{n+1}\st (f_1,\dots,f_n)\in X\right\}.$$
\end{definition}

Note that
  \begin{align*}
  (\Prism X)_i &= \left\{(f_1,\dots,f_n,0),\ (f_1,\dots,f_n,1)\st (f_1,\dots,f_n)\in X_i\right\}\\
  &\cup \left\{f=(f_1,\dots,f_n,\X)\st (f_1,\dots,f_n)\in X_{i-1}\right\}.
  \end{align*}

As a cell complex, $\Prism X$ is just the product $X\x Q_1$.  Therefore,
we can use the product formula, Theorem~\ref{product-laplacian}, to
write down the Laplacian spectra of $\Prism X$ in terms of those of $X$.
The formula can be stated in several equivalent ways, all
of which will be useful in different contexts.

\begin{theorem}
\label{prism-spectrum}
Let $X^d$ be a proper cubical complex.  Then

\begin{subequations}
\begin{align}
\label{prism:Ert}
\Etot(\Prism X;\q,t) &= (1+\q^2+t\q^2) \Etot(X;\q,t),\\
\label{prism:Er}
\Etot_i(\Prism X;\q) &= (1+\q^2)\Etot_i(X;\q) + \q^2\Etot_{i-1}(X;\q),\\
\label{prism:stot}
\stot_i(\Prism X) &= \{\lambda\st\lambda\in \stot_i(X)\}
  ~\cup~ \{\lambda+2\st\lambda\in \stot_i(X)\}
  ~\cup~ \{\mu+2\st\mu\in \stot_{i-1}(X)\},\\
\label{prism:sud}
\sud_i(\Prism X) &\zeq \{\lambda\st\lambda\in \sud_i(X)\}
  ~\cup~ \{\lambda+2\st\lambda\in \stot_i(X)\},
\end{align}
\end{subequations}

\noindent for all $0\leq i\leq d$, where $\cup$ denotes multiset union.

In particular, the prism operation preserves Laplacian integrality.
\end{theorem}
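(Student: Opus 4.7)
My plan exploits the fact that $\Prism X = X\x Q_1$ as cell complexes, so Theorem~\ref{product-laplacian} does the heavy lifting for the total Laplacian spectrum. The key input is the explicit computation $\Etot(Q_1;\q,t) = 1+\q^2+t\q^2$ from Example~\ref{q1-unweighted}, equation~\eqref{spec:Q1}. Multiplying $\Etot(X;\q,t)$ by this polynomial via Theorem~\ref{product-laplacian} immediately yields formula~\eqref{prism:Ert}. Formula~\eqref{prism:Er} then drops out by reading off the coefficient of $t^i$ on both sides, and formula~\eqref{prism:stot} is just the translation of~\eqref{prism:Er} into multiset language: the factor $1+\q^2$ contributes two copies of $\stot_i(X)$, one unshifted and one shifted by $2$, while the factor $t\q^2$ contributes one copy of $\stot_{i-1}(X)$ shifted by $2$.

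The up-down formula~\eqref{prism:sud} requires a different tactic, because (as recorded in Section~\ref{product-sum}) the up-down Laplacian on a product carries cross terms $\pm\bd_X\otimes\cbd_Y$ and $\pm\cbd_X\otimes\bd_Y$ that do not decouple as a tensor sum, so there is no naive product formula for $\Eud$. Instead I plan to use the alternating-sum inversion~\eqref{from-total-to-ud} that recovers $\Eud_i$ from the $\Etot_j$'s, substitute formula~\eqref{prism:Er} inside, and observe that after telescoping one obtains
\begin{align*}
\Eud_i(\Prism X;\q) &\zeq (1+\q^2)\Eud_i(X;\q) + \q^2\Eud_{i-1}(X;\q).
\end{align*}
Applying the identity $\Eud_i(X;\q)+\Eud_{i-1}(X;\q)\zeq\Etot_i(X;\q)$ from~\eqref{tot-from-uddu} then converts the right-hand side into $\Eud_i(X;\q)+\q^2\Etot_i(X;\q)$, which is exactly the generating-function form of~\eqref{prism:sud}. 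Laplacian integrality is then automatic, since every nonzero eigenvalue of every Laplacian of $\Prism X$ has the form $\lambda$ or $\lambda+2$ for some eigenvalue $\lambda$ of a Laplacian of $X$.

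The main obstacle is the bookkeeping in the alternating-sum step for~\eqref{prism:sud}: parities must be tracked carefully, the convention $\Etot_{-1}(X;\q)=0$ (since cell complexes have no $(-1)$-cells) is needed for the telescope to collapse cleanly, and one must be mindful of the $\zeq$ equivalences throughout, since multiplicities of the zero eigenvalue shift under these manipulations and need a final sanity check via $\Eud_i(\Prism X,1)=|(\Prism X)_i|=2|X_i|+|X_{i-1}|$.
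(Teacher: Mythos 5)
Your treatment of \eqref{prism:Ert}--\eqref{prism:stot} is exactly the paper's: apply Theorem~\ref{product-laplacian} to $\Prism X = X\x Q_1$ together with \eqref{spec:Q1}, then read off coefficients of $t^i$ and translate into multisets. For \eqref{prism:sud}, however, your route differs from the paper's and, as written, passes through a false intermediate statement. The difficulty is that $\zeq$ (equality up to the constant term) is not preserved under multiplication by $\q^2$ or $1+\q^2$: a constant discrepancy becomes a discrepancy in the coefficient of $\q^2$, i.e., in the multiplicity of the \emph{nonzero} eigenvalue $2$, which $\zeq$ does not forgive. Concretely, your displayed claim
$$\Eud_i(\Prism X;\q) \zeq (1+\q^2)\Eud_i(X;\q) + \q^2\Eud_{i-1}(X;\q)$$
is off by $|X_{i-1}|\,\q^2$ plus a constant: for $X=Q_1$ and $i=1$ the right-hand side is $1+2\q^2+\q^4$, while $\Lud_1(Q_2)$ has spectrum $\{4,0,0,0\}$, so the eigenvalue $2$ does not occur at all. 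Your next step --- replacing $\q^2\bigl(\Eud_i(X;\q)+\Eud_{i-1}(X;\q)\bigr)$ by $\q^2\Etot_i(X;\q)$ --- is likewise not a valid $\zeq$-manipulation, but its error ($-|X_{i-1}|\q^2$, coming from the zero-multiplicity discrepancy between $\Etot_i$ and $\Eud_i+\Eud_{i-1}$) exactly cancels the first one, so your final formula is correct. Still, the argument as sketched contains a false lemma; your closing caveat attributes the bookkeeping issue only to multiplicities of the zero eigenvalue, which understates the problem. To repair it, either track these constants explicitly or run the whole telescoping with the exact generating functions of \emph{nonzero} eigenvalues, for which $\Etot_j=\Eud_j+\Eud_{j-1}$ and the alternating-sum inversion hold on the nose.

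The paper instead proves \eqref{prism:sud} by induction on $i$: the base case follows from \eqref{prism:stot} since $\sud_0=\stot_0$, and the inductive step uses the multiset recursion $\sud_i(\Prism X)\zeq\stot_i(\Prism X)\sm\sud_{i-1}(\Prism X)$, substituting \eqref{prism:stot} and the inductive hypothesis and cancelling. Because that argument only ever takes multiset differences and never multiplies a $\zeq$-identity by a non-constant polynomial, the slack stays confined to the zero eigenvalue throughout. Your telescoping is essentially the closed form of that induction and becomes a valid alternative once the constants are handled honestly.
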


\begin{proof}
Equation~\eqref{prism:Ert} follows from Theorem~\ref{product-laplacian}
together with equation~\eqref{spec:Q1}, and \eqref{prism:Er} and \eqref{prism:stot} are just rephrasings of~\eqref{prism:Ert}.

To prove~\eqref{prism:sud}, we proceed by induction on $i$.  
For $i=0$, the formula follows from \eqref{prism:stot}, because
$\sud_0(X) = \stot_0(X)$ and $\stot_{-1}(X)=\0$.
For $i\geq 1$, we have
\begin{align*}
\sud_i(\Prism X) &\zeq \stot_i(\Prism X) ~\sm~ \sud_{i-1}(\Prism X)\\
&= \stot_i(\Prism X) ~\sm~ (\sud_{i-1}(X) ~\cup~\{\lambda+2\st\lambda\in \stot_{i-1}(X)\})\\
&\zeq \sud_i(X) ~\cup~ \{\lambda+2\st\lambda\in \stot_{i}(X)\} ~\cup~ \sud_{i-1}(X) ~\cup~ \{\lambda+2\st\lambda\in \stot_{i-1}(X)\}\\
&\qquad\sm~(\sud_{i-1}(X) ~\cup~ \{\lambda+2\st\lambda\in \stot_{i-1}(X)\})\\
&= \sud_i(X) ~\cup~ \{\lambda+2\st\lambda\in \stot_{i}(X)\}.
\end{align*}
\end{proof}

\subsection{Laplacian spectra of cubes}

As a consequence of Theorem~\ref{prism-spectrum}, we obtain a formula for
the Laplacian eigenvalues of $Q_n$, and thus for the torsion-weighted
spanning tree enumerators $\tau_k(Q_n)$.

\begin{theorem}
\label{cube-eigenvalue-theorem}
Cubes and their skeletons are Laplacian integral.  Specifically,
for all $n\geq 1$, we have
\begin{align}
 \Etot(Q_n;\q,t) &= (1+\q^2+t\q^2)^n ~=~ \sum_{k=0}^n t^k \binom{n}{k} \q^{2k} (1+\q^2)^{n-k},\\
\label{ud-du-cube-spectrum}
 \Eud(Q_n;\q,t) 
 &=~ \sum_{k=0}^{n-1} t^k \left[\sum_{j=k+1}^n \binom{n}{j}\binom{j-1}{k} \q^{2j}\right].
\end{align}
\end{theorem}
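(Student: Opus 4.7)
The plan is to establish the first formula by induction on $n$ using the isomorphism $Q_n\cong\Prism Q_{n-1}$, and then to deduce the second formula from the first by applying the standard relation between up--down and total Laplacian spectra recorded in equation~\eqref{from-total-to-ud}.

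For the total-Laplacian formula, I would start with the base case $Q_0$, a single vertex, where $\Etot(Q_0;\q,t)=1$. Equation~\eqref{prism:Ert} of Theorem~\ref{prism-spectrum} gives $\Etot(Q_n;\q,t)=(1+\q^2+t\q^2)\,\Etot(Q_{n-1};\q,t)$, so induction yields $\Etot(Q_n;\q,t)=(1+\q^2+t\q^2)^n$; expanding $\bigl((1+\q^2)+t\q^2\bigr)^n$ by the binomial theorem produces the claimed closed form $\sum_k t^k\binom{n}{k}\q^{2k}(1+\q^2)^{n-k}$. Since every $\q$-exponent appearing is a nonnegative integer and every coefficient is a nonnegative integer, $Q_n$ is Laplacian integral. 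The corresponding statement for each $k$-skeleton follows because its total Laplacian in dimension $i<k$ agrees with that of $Q_n$, while in dimension $k$ it equals $\Ldu_k(Q_n)$, whose nonzero eigenvalues form a sub-multiset of $\stot_k(Q_n)$.

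For the up--down formula, extract the coefficient of $t^k$ from the first formula to obtain $\Etot_k(Q_n;\q)=\binom{n}{k}\q^{2k}(1+\q^2)^{n-k}$, and substitute into~\eqref{from-total-to-ud}:
\[
\Eud_k(Q_n;\q)\zeq\sum_{j=0}^k(-1)^{k-j}\binom{n}{j}\q^{2j}(1+\q^2)^{n-j}.
\]
Expanding $(1+\q^2)^{n-j}$ and interchanging the order of summation, the coefficient of $\q^{2i}$ becomes $\sum_{j=0}^{\min(k,i)}(-1)^{k-j}\binom{n}{j}\binom{n-j}{i-j}$. The trinomial rewriting $\binom{n}{j}\binom{n-j}{i-j}=\binom{n}{i}\binom{i}{j}$ pulls out a factor of $\binom{n}{i}$ and reduces the inner sum to $\sum_{j=0}^{\min(k,i)}(-1)^{k-j}\binom{i}{j}$, which by the classical identity $\sum_{j=0}^k(-1)^j\binom{i}{j}=(-1)^k\binom{i-1}{k}$ evaluates to $\binom{i-1}{k}$ when $i>k$, to $0$ when $0<i\leq k$, and to $(-1)^k$ when $i=0$. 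Since the $i=0$ contribution is a pure constant, it is absorbed under $\zeq$, yielding $\Eud_k(Q_n;\q)\zeq\sum_{i=k+1}^n\binom{n}{i}\binom{i-1}{k}\q^{2i}$, which matches the claim after summing against $t^k$.

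There is no genuine obstacle in this argument once the prism recursion is in place; the calculation is essentially a pair of standard binomial-identity manipulations. The only point requiring mild care is to isolate the $i=0$ term and confirm that it is indeed a constant, so that working modulo $\zeq$ is legitimate. Should the exact multiplicity of the zero eigenvalue be desired, it is determined by $\Eud_k(Q_n;1)=\bigl|(Q_n)_k\bigr|=\binom{n}{k}2^{n-k}$.
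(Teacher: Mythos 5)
Your proof is correct and follows essentially the same route as the paper: the total spectrum comes from the prism/product formula applied to $Q_n=Q_1\x\cdots\x Q_1$, and the up--down spectrum is then extracted from the total via the relation between $\Eud$ and $\Etot$. The only difference is cosmetic: the paper computes $\Eud(Q_n;\q,t)$ in one stroke as $\bigl((1+\q^2(1+t))^n-1\bigr)/(1+t)$ using \eqref{ud-recurrence}, whereas you apply the iterated alternating-sum form \eqref{from-total-to-ud} coefficient-by-coefficient and clean up with binomial identities; both rest on the same underlying recurrence, and your binomial manipulations (including the isolation of the constant $i=0$ term under $\zeq$) check out.
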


\begin{proof}
The formula for $\Etot_k(Q_n;\q,t)$ follows from Theorem~\ref{prism-spectrum},
since $Q_n$ can be identified with the $n$-fold product $Q_1\x\cdots\x Q_1$ as cubical
complexes (indeed, as cell complexes).

By \eqref{ud-recurrence},
we can obtain $\Eud(Q_n;\q,t)$ from $\Etot(Q_n;\q,t)$ by deleting all the
$\q$-free terms (i.e., those which correspond to zero eigenvalues) and
dividing by $1+t$.  The only such term is a single $1$ (from the $k=0$
summand).  Therefore,
  \begin{align*}
  \Eud(Q_n;\q,t) &= \frac{\left(1+\q^2(1+t)\right)^n-1}{1+t}\\
    &=~ \frac{\left(\displaystyle\sum_{j=0}^n\binom{n}{j} \q^{2j} (1+t)^j \right)-1}{1+t}
    ~=~ \frac{\displaystyle\sum_{j=1}^n\binom{n}{j} \q^{2j} (1+t)^j}{1+t}\\
    &=~ \sum_{j=1}^n\binom{n}{j} \q^{2j} (1+t)^{j-1}
    ~=~ \sum_{j=1}^n\binom{n}{j} \q^{2j} \sum_{k=0}^{j-1} \binom{j-1}{k} t^k\\
    &=~ \sum_{k=0}^{n-1} t^k \left[\sum_{j=k+1}^n \binom{n}{j}\binom{j-1}{k} \q^{2j}\right].
  \end{align*}
\end{proof}

\begin{corollary}
\label{thm:general-count}
Let $n\geq 1$ and $1\leq k\leq n$.  Then
  $$\tau_k(Q_n) = \prod_{j=k+1}^n (2j)^{\binom{n}{j} \binom{j-2}{k-1}}.$$
\end{corollary}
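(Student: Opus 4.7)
The plan is to apply Corollary~\ref{alternating-product}, which expresses $\tau_k$ as an alternating product of the quantities $\pi_i$, and to read the $\pi_i(Q_n)$ directly off the explicit up-down Laplacian spectrum computed in Theorem~\ref{cube-eigenvalue-theorem}. Since $Q_n$ is a convex polytope it is contractible, so $\HH_i(Q_n;\Zz)=0$ for every $i<n$ and the hypothesis of the corollary holds. This yields
$$\tau_k(Q_n) ~=~ \prod_{i=0}^k \pi_i(Q_n)^{(-1)^{k-i}}$$
for every $1\leq k\leq n$.

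Extracting the coefficient of $t^{i-1}$ from the formula for $\Eud(Q_n;\q,t)$ in Theorem~\ref{cube-eigenvalue-theorem} shows that for $i\geq 1$ the nonzero spectrum of $\Lud_{i-1}(Q_n)$ consists of the value $2j$ with multiplicity $\binom{n}{j}\binom{j-1}{i-1}$ for each $j\in\{i,i+1,\dots,n\}$, so
$$\pi_i(Q_n) ~=~ \prod_{j=i}^n (2j)^{\binom{n}{j}\binom{j-1}{i-1}}.$$
The convention $\pi_0(Q_n)=2^n$ may be written as a ``$j=1$'' factor $(2\cdot 1)^n$, so that $\pi_0$ blends uniformly with the $j=1$ pieces of the other $\pi_i$.

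After substitution and reordering the double product (by $j$ rather than by $i$), the exponent of $2j$ in $\tau_k(Q_n)$ becomes
$$\binom{n}{j}\sum_{i=1}^{\min(k,j)}(-1)^{k-i}\binom{j-1}{i-1},$$
augmented in the $j=1$ slot by $(-1)^k n$ coming from $\pi_0$. The standard identity $\sum_{m=0}^{r}(-1)^{r-m}\binom{s}{m}=\binom{s-1}{r}$ collapses the inner sum to $\binom{j-2}{k-1}$ whenever $j\geq k$, while for $2\leq j<k$ the sum becomes a complete alternating binomial sum and vanishes. The $j=1$ contribution is then absorbed by the $\pi_0$ term; the $j=k$ factor dies because $\binom{k-2}{k-1}=0$ for $k\geq 2$; and the surviving factors $j\geq k+1$ carry exponent $\binom{n}{j}\binom{j-2}{k-1}$, giving exactly the claimed product. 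The only real difficulty is the boundary bookkeeping---making sure that $\pi_0$ is correctly folded into the $j=1$ slot and that the spurious $j\leq k$ terms cancel; once the exponents are correctly collected, the alternating binomial identity does the rest.
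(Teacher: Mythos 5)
Your proposal is correct and follows essentially the same route as the paper: read off $\pi_i(Q_n)=\prod_{j=i}^n(2j)^{\binom{n}{j}\binom{j-1}{i-1}}$ from Theorem~\ref{cube-eigenvalue-theorem}, substitute into Corollary~\ref{alternating-product}, reorder the double product by $j$, and collapse the alternating sum of binomial coefficients to $\binom{j-2}{k-1}$, with the $j=1$ factor cancelling against $\pi_0=2^n$ and the $2\le j\le k$ factors vanishing. The boundary bookkeeping you flag is handled the same way in the paper (which invokes Pascal's identity rather than naming the closed-form alternating-sum identity), so there is nothing to add.
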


\begin{proof}
Theorem~\ref{cube-eigenvalue-theorem} implies that
  $$\pi_i(Q_n) = \prod_{j=i}^n (2j)^{\binom{n}{j}\binom{j-1}{i-1}}
  = \prod_{j=1}^n (2j)^{\binom{n}{j}\binom{j-1}{i-1}}$$
for $0<i\leq n$ (adopting the convention that $\binom{a}{b}=0$
for $a<b$).  Applying the alternating product formula, Corollary~\ref{alternating-product},
gives
\begin{align*}
\tau_k(Q_n) ~&=~ \prod_{i=0}^k \pi_i^{(-1)^{k-i}}
~=~ 2^{n(-1)^k}\prod_{i=1}^k \left[ \prod_{j=1}^n (2j)^{\binom{n}{j}\binom{j-1}{i-1}} \right]^{(-1)^{k-i}}\\
&=~ 2^{n(-1)^k}\prod_{j=1}^n (2j)^{\binom{n}{j} \left(\sum_{i=1}^k (-1)^{k-i} \binom{j-1}{i-1} \right)}.
\end{align*}
The $j=1$ factor in this product simplifies to $2^{n(-1)^{k-1}}$,
canceling the initial factor of $2^{n(-1)^k}$.
For $2\leq j\leq k$, the sum in the exponent vanishes,
and for $k+1\leq j\leq n$, it simplifies to
$\binom{j-2}{k-1}$ (this can be seen by
repeatedly applying Pascal's identity), giving the desired formula.
\end{proof}


\section{Weighted Laplacians of cubes}
\label{sec:weighted}

In this section, we study a weighting that associates a Laurent monomial
to each face of $Q_n$, giving finer information about Laplacian spectra of cubes.

\subsection{Algebraically weighted eigenvalues}

Let $X\subseteq Q_n$ be a proper cubical complex, and introduce commuting indeterminates
$x_i,y_i,q_i$ for $i\in\Nn$.  Weight each face
$f=(f_1,\dots,f_n)\in Q_n$ by the monomial
  $$\xi_f = \prod_{i:f_i=\X} q_i \prod_{i:f_i=0} x_i \prod_{i:f_i=1} y_i.$$

Define the \emph{algebraically weighted cubical boundary map}
by
  \begin{equation} 
\label{alg-cube-bd}
  \begin{aligned}
  \wbd_{X,k}\colon C_k(X) &\to C_{k-1}(X)\\
  [g] &\mapsto \sum_{f\in X_{k-1}} \sign(f,g) \frac{\xi_g}{\xi_f} [f]
  \end{aligned}
  \end{equation}
so that the corresponding weighted coboundary map is
  \begin{equation}
 \label{alg-cube-cobd}
  \begin{aligned}
  \wcbd_{X,k}\colon C_{k-1}(X) &\to C_k(X)\\
  [f] &\mapsto \sum_{g\in X_k} \sign(f,g) \frac{\xi_g}{\xi_f} [g].
  \end{aligned}
  \end{equation}
It is easy to check that $\wbd\wbd=\wcbd\wcbd=0$.
(This vital equality would fail if we had defined the weighted
boundary more ``combinatorially naturally'' by
$[g]\mapsto\sum_{f\in X_{k-1}} \sign(f,g)\xi_g[f]$.)

The \emph{$k^{th}$ up-down}, \emph{down-up}, and \emph{total algebraically
weighted cubical Laplacians} are
respectively
  $$
  \Lwud_{X,k} = \wbd_{X,k+1} \wcbd_{X,k+1},\qquad
  \Lwdu_{X,k} = \wcbd_{X,k} \wbd_{X,k},\qquad
  \Lwtot_{X,k} = \Lwud_{X,k}+\Lwdu_{X,k}.
  $$
As in the unweighted case, it is convenient to record the eigenvalues
as generating functions:
\begin{align*}
\Ewany_k(X;\q) &= \sum_{\lambda\in \Swany_k(X)} \!\!\! \q^\lambda,\\
\Ewany(X;\q,t) &= \sum_{k=0}^{\dim X}\sum_{\lambda\in\Swany_k(X)} \!\!\! t^k \q^\lambda,
\end{align*}
where $\bullet\in\{\text{ud},\text{du},\text{tot}\}$. 

\begin{example}
\label{q1-weighted}
Consider the complex $X=Q_1$ (see Example~\ref{q1-unweighted}),
whose edge we regard as lying in direction $i$. The vertices have weights $x_i$ and $y_i$, and the edge has weight $q_i$.  The
weighted boundary, coboundary and Laplacian matrices are thus
  \begin{align*}
  \wbd_1 &= \begin{bmatrix} -q_i/x_i\\q_i/y_i\end{bmatrix}, & 
  \Lwud_0 = \Ltot_0 &= \begin{bmatrix} q_i^2/x_i^2 & -q_i^2/x_iy_i\\ -q_i^2/x_iy_i & q_i^2/y_i^2 \end{bmatrix},\\
  \wcbd_1 &= \begin{bmatrix} -q_i/x_i & q_i/y_i\end{bmatrix}, &
  \Lwdu_1 = \Ltot_1 &= \begin{bmatrix} q_i^2/x_i^2+q_i^2/y_i^2\end{bmatrix},
  \end{align*}
and so the weighted spectrum polynomial is
  \begin{equation} \label{spec:Q1-weighted}
  \Ewtot(Q_1;\q,t) = \sum_i\sum_{\lambda\in\stot_i(Q_1)} \!\!\! t^i \q^\lambda
    = 1+\q^{u_i}+t\q^{u_i}
  \end{equation}
where
  $$u_i=\frac{q_i^2}{x_i^2}+\frac{q_i^2}{y_i^2}.$$
\end{example}

Just as in the unweighted setting, the total eigenvalue
generating function $\Ewtot(X;\q,t)$ is multiplicative on products of
cubical complexes, that is,
  \begin{equation} \label{weighted-product-formula}
  \Ewtot(X\x Y;\q,t) = \Ewtot(X;\q,t)\Ewtot(Y;\q,t).
  \end{equation}
This formula is proved in exactly the same way as its unweighted
analogue Theorem~\ref{product-laplacian} (which can be recovered
from~\eqref{weighted-product-formula}
by setting $q_i=x_i=y_i=1$).
In particular, if $X\subseteq Q_{n-1}$ is a proper cubical complex,
then the prism $\Prism X\subseteq Q_n$ is just the product of $X$
with a copy of $Q_1$ lying in direction~$n$.  Hence
  \begin{equation} \label{wspec:prism}
  \Ewtot(\Prism X;\q,t) ~=~ \left(1+\q^{u_n}+t\q^{u_n}\right) \Ewtot(X;\q,t).
  \end{equation}
Note that specializing $q_i=x_i=y_i=1$ makes $u_i=2$, and so recovers
the first assertion of Theorem~\ref{prism-spectrum}.  The corresponding
recurrence for the multisets of eigenvalues is
  \begin{equation} \label{weighted-prism-spectrum}
  \Swtot_i(\Prism X) ~=~ \{\lambda\st\lambda\in \Swtot_i(X)\}
  ~\cup~\left\{\lambda+u_n\st\lambda\in \Swtot_i(X)\right\}
  ~\cup~\left\{\mu+u_n\st\mu\in \Swtot_{i-1}(X)\right\}.
  \end{equation}

We now calculate the eigenvalues of the weighted Laplacians of
the full cube $Q_n$.

\begin{theorem}
\label{cube-weighted-eigenvalue-theorem}
For all $n\geq 1$, we have
\begin{align}
  \Ewtot(Q_n;\q,t) &= \prod_{k=1}^n \left(1+\q^{u_k}+t\q^{u_k}\right) ~=~
  \sum_{i=0}^n t^i \left[\sum_{k=i}^n \binom{k}{i} e_k\right], \label{CWE:1}\\
  \Ewud(Q_n;\q,t) &= \sum_{j=1}^n (1+t)^{j-1} e_j ~=~
  \sum_{i=0}^{n-1} t^i \left[\sum_{j=i+1}^n \binom{j-1}{i} e_j\right], \label{CWE:2}
\end{align}
where $e_j$ denotes the $j^{th}$ elementary symmetric function in
the forms $\q^{u_1},\dots,\q^{u_n}$.
\end{theorem}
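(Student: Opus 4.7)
The plan is to reduce everything to the multiplicative behavior of $\Ewtot$ on cartesian products, using the fact that $Q_n$ is (as a cubical complex, indeed as a cell complex) isomorphic to the $n$-fold product $Q_1\x Q_1\x\cdots\x Q_1$, with the $k$th factor lying in direction~$k$. Iterating the weighted product formula \eqref{weighted-product-formula} and plugging in the base case \eqref{spec:Q1-weighted} yields
$$\Ewtot(Q_n;\q,t) ~=~ \prod_{k=1}^n \Ewtot(Q_1;\q,t)\Big|_{\text{dir }k} ~=~ \prod_{k=1}^n \left(1+\q^{u_k}+t\q^{u_k}\right) ~=~ \prod_{k=1}^n \left(1+(1+t)\q^{u_k}\right),$$
which is the first equality of \eqref{CWE:1}. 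Next, one invokes the classical identity $\prod_{k=1}^n(1+zz_k)=\sum_{k=0}^n z^k e_k(z_1,\dots,z_n)$, specialized at $z=1+t$ and $z_k=\q^{u_k}$, obtaining $\sum_{k=0}^n (1+t)^k e_k$. Expanding $(1+t)^k$ by the binomial theorem and swapping the order of summation gives the second equality of \eqref{CWE:1}.

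For \eqref{CWE:2}, I would appeal to the weighted analogue of the identity $\Etot_i(X;\q)\zeq\Eud_i(X;\q)+\Eud_{i-1}(X;\q)$ from \eqref{tot-from-uddu}. The derivation of this identity in the unweighted case rests on two linear-algebra observations that transfer verbatim to the weighted setting: (i) $\Lwud$ and $\Lwdu$ commute (both of their composites vanish because $\wcbd\wcbd=0$ and $\wbd\wbd=0$), so they diagonalize simultaneously, forcing $\Swtot_i(X)=\Swud_i(X)\cup\Swdu_i(X)$ as multisets; and (ii) $AB$ and $BA$ always share non-zero spectrum, so $\Swdu_i(X)$ agrees with $\Swud_{i-1}(X)$ up to the multiplicity of zero. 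Summing against $t^i$ produces $\Ewtot(X;\q,t)\zeq(1+t)\Ewud(X;\q,t)$, so the non-constant part of $\Ewud(Q_n;\q,t)$ equals
$$\frac{\prod_{k=1}^n\bigl(1+(1+t)\q^{u_k}\bigr)\;-\;1}{1+t} ~=~ \frac{\sum_{j=1}^n (1+t)^j e_j}{1+t} ~=~ \sum_{j=1}^n (1+t)^{j-1} e_j,$$
which is the first equality of \eqref{CWE:2}. A further binomial expansion and interchange of sums delivers the second equality, exactly mirroring the end of the proof of Theorem~\ref{cube-eigenvalue-theorem}.

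The argument is essentially a mechanical weighted refinement of Theorem~\ref{cube-eigenvalue-theorem}, so there is no genuine obstacle. The only point requiring care is the justification that the spectral recursions of Section~\ref{cellular-section} remain valid after replacing $\bd,\cbd$ by $\wbd,\wcbd$; this is the reason the paper emphasizes that $\wbd\wbd=0$ immediately after \eqref{alg-cube-cobd}, and it is exactly what guarantees that the commutation and transpose-spectrum tricks underlying \eqref{tot-from-uddu} and \eqref{ud-recurrence} survive the weighting.
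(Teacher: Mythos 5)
Your proposal is correct and follows essentially the same route as the paper: iterate the weighted product formula with the $Q_1$ base case to get the first equality of \eqref{CWE:1}, then obtain $\Ewud$ by stripping the constant term and dividing by $1+t$ via the weighted analogue of \eqref{ud-recurrence}. Your derivation of the second equality of \eqref{CWE:1} via $\prod_k(1+z\q^{u_k})=\sum_k z^k e_k$ at $z=1+t$ is just a repackaging of the paper's direct coefficient extraction, and your explicit justification that $\wbd\wbd=0$ makes the spectral recursions survive the weighting is a point the paper leaves implicit.
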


\begin{proof}
The first equality of \eqref{CWE:1} follows from iterating
the weighted product formula \eqref{weighted-product-formula}.
Extracting the $t^i$ coefficient to find $\Etot_i(Q_n;\q)$
and abbreviating $u_A=\sum_{i\in A} u_i$ for $A\subseteq[n]$
we have
\begin{align*}
\Ewtot_i(Q_n;\q)
  &= \sum_{\substack{A\subseteq[n]\\ |A|=i}} \left(\prod_{j\in A} r^{u_j}\right)
       \left(\prod_{j\not\in A} (1+r^{u_j})\right)
  \quad=\quad \sum_{\substack{A\subseteq[n]\\ |A|=i}} r^{u_A} \ \ \sum_{B\subseteq[n]\sm A} r^{u_B}\\
  &= \sum_{k=i}^n \sum_{\substack{C\subseteq[n]\\ |C|=k}} \ \ \sum_{\substack{A\subseteq C\\|A|=i}} r^{u_C}
  \quad=\quad \sum_{k=i}^n \ \ \sum_{\substack{C\subseteq[n]\\ |C|=k}} \binom{k}{i} r^{u_C}\\
  &= \sum_{k=i}^n \binom{k}{i} e_k
\end{align*}
which is the second equality of \eqref{CWE:1}.

Finally, by the weighted analogues of the equalities \eqref{ud-recurrence},
we can obtain $\Ewud(Q_n;\q,t)$ from $\Ewtot(Q_n;\q,t)$ by deleting all the
$\q$-free terms (i.e., those which correspond to zero eigenvalues) and
dividing by $1+t$:
  \begin{align*}
  \Ewud(Q_n;\q,t) &= \frac{\left(\displaystyle\prod_{k=1}^n 1+(1+t)\q^{u_k}\right)-1}{1+t}\\
   &=~ \sum_{j=1}^n (1+t)^{j-1} e_j
   ~=~ \sum_{j=1}^n \sum_{i=0}^{j-1} \binom{j-1}{i} t^i e_j\\
   &=~ \sum_{i=0}^{n-1} t^i \left[\sum_{j=i+1}^n \binom{j-1}{i} e_j\right].
  \end{align*}
\end{proof}

Extracting the $t^i$ coefficient from formula \eqref{CWE:2}
of Theorem~\ref{cube-weighted-eigenvalue-theorem} gives
  $$\Ewud_i(Q_n;\q) = \sum_{\substack{A\subseteq[n]\\ |A|\geq i+1}} \binom{|A|-1}{i} \q^{u_A}.$$
That is, the nonzero eigenvalues
of $\Lwud_i(Q_n)$ are the expressions $u_A$, each occurring with multiplicity
$\binom{|A|-1}{i}$.  Therefore, the product of nonzero eigenvalues---that
is, the algebraically weighted analogue of the invariant $\pi_i(Q_n)$---is
  \begin{equation} \label{product-alg-NZE}
  \prod_{\substack{\lambda\in\Swud_i(Q_n)\\ \lambda\neq 0}} \lambda = \prod_{\substack{A\subseteq[n]\\ |A|\geq i+1}} u_A^{\binom{|A|-1}{i}}
  = \prod_{\substack{A\subseteq[n]\\ |A|\geq i+1}} \left[\sum_{j\in A} \left(\frac{q_j^2}{x_j^2}+\frac{q_j^2}{y_j^2}\right)\right]^{\binom{|A|-1}{i}}.
  \end{equation}

\subsection{Weighted tree enumeration}

We now consider the polynomial generating function
  $$\hat\tau_k(X) = \sum_{Y\in\CST_k(X)} \prod_{g\in Y_k}\xi_g$$
for a proper cubical complex $X$.  As usual, the main case of interest
is $X=Q_n$.  In principle, the invariants $\hat\tau_k(Q_n)$
can be computed in terms of the eigenvalues of weighted Laplacians,
using Theorem~\ref{thm:WCMTT}.  Those ``combinatorially weighted''
Laplacians look similar to (in fact, simpler than) the algebraically
weighted Laplacians discussed in the previous section, but they
do not come from well-defined boundary maps (i.e., whose square is zero)
and their eigenvalues are not even polynomials.  On the other hand,
there is strong evidence for the following formula.

\begin{conjecture}
\label{conj:new}
$$\hat\tau_k(Q_n) =
    (q_1\cdots q_n)^{\displaystyle \sum_{i=k-1}^{n-1}\binom{n-1}{i}\binom{i-1}{k-2}}
    \prod_{\substack{A\subseteq[n]\\|A|\geq k+1}} \left[
    \sum_{i\in A} \left(q_i(x_i+y_i)\prod_{j\in A\sm i} x_j y_j \right) \right]^{\displaystyle{\binom{|A|-2}{k-1}}}.$$
\end{conjecture}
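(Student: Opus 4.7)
The plan is to apply Theorem~\ref{thm:WCMTT}~(2) to the $k$-skeleton of $X=Q_n$ under the combinatorial weighting that assigns $\xi_f$ to each $k$-cell $f$ (writing $\xi_f = x_f^2$ to fit the notation of Section~\ref{WCMTT-section}). Since $Q_n$ is contractible, its $k$-skeleton is APC; a preliminary step is to verify that every cubical spanning $k$-tree of $Q_n$ is $\Zz$-torsion-free, via a long exact sequence argument on the pair $(Q_{n,(k)},Y)$, so that the homology-weighted enumerator produced by Theorem~\ref{thm:WCMTT} genuinely equals $\hat\tau_k(Q_n)$ as defined in the conjecture. The problem then reduces to evaluating $\det\hat L_U$ for a conveniently chosen $(k-1)$-CST facet set $U$.

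My first attempt would exploit the iterated prism structure $Q_n = Q_1\x\cdots\x Q_1$ by choosing $U$ to be a ``product-like'' spanning $(k-1)$-tree compatible with the Cartesian factorization, in the hope that row and column operations bring $\hat L_U$ into a block form indexed by direction subsets $A\subseteq[n]$ with $|A|\geq k$. Each block should contribute a power of $\sum_{i\in A} q_i(x_i+y_i)\prod_{j\in A\sm i} x_j y_j$, and the exponent $\binom{|A|-2}{k-1}$ should emerge from a Pascal's identity calculation of the same shape as in the proof of Corollary~\ref{thm:general-count}, where a $\binom{|A|-1}{k-1}$-style exponent coming from a $\hat\pi$-level formula is converted into a $\binom{|A|-2}{k-1}$-style exponent via the alternating product identity of Corollary~\ref{alternating-product}.

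An alternative route is to search for an explicit rational change of variables transforming the algebraically weighted eigenvalue $u_A = \sum_{i\in A}(q_i^2/x_i^2 + q_i^2/y_i^2)$ appearing in equation~\eqref{product-alg-NZE} into the polynomial $\sum_{i\in A} q_i(x_i+y_i)\prod_{j\in A\sm i} x_j y_j$, with the monomial prefactor $(q_1\cdots q_n)^{\sum_{i=k-1}^{n-1}\binom{n-1}{i}\binom{i-1}{k-2}}$ absorbing a Jacobian-like correction. If such a substitution can be justified by a chain-complex gauge transformation relating the two weighted Laplacians, the conjecture would reduce to the already established Theorem~\ref{cube-weighted-eigenvalue-theorem}.

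The main obstacle, as the authors themselves emphasize, is precisely the absence of a direct algebraic bridge between the two weightings: combinatorial weights cannot be assigned to cells of every dimension in a way that preserves $\bd^2=0$, so the eigenvalue half of Theorem~\ref{thm:WCMTT} is unavailable, and the individual entries of the combinatorially weighted $\hat L_U$ do not visibly factor according to direction subsets. I would expect the hard step to be either finding explicit row and column operations that realize the sought block decomposition of $\hat L_U$, or constructing a multi-parameter universal weighting that specializes to both the algebraic weighting (where spectra are known by Theorem~\ref{cube-weighted-eigenvalue-theorem}) and to the combinatorial weighting (where tree enumeration is desired), allowing one to transfer information between the two settings.
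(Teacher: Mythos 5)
This statement is Conjecture~\ref{conj:new}; the paper offers no proof of it. The only support the authors give is computational verification for small $n$ and $k$, the case $k=1$ (which is Theorem~3 of \cite{MR}), and the case $k=n-1$ (checked directly since an $(n-1)$-spanning tree of $Q_n$ is generated by all but one of its $(n-1)$-faces). Your proposal likewise does not constitute a proof: both of your routes terminate at an explicitly unexecuted ``hard step.'' The first route never specifies the product-like $(k-1)$-CST $U$, never exhibits the row and column operations, and never shows that $\det\hat L_U$ factors over direction subsets $A$ --- and the entries of the combinatorially weighted Laplacian do not visibly respect the Cartesian factorization, since the weight $\xi_f$ of a $k$-cell is not multiplicative over the prism decomposition in the way the algebraic weights $\xi_g/\xi_f$ are. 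The second route is precisely the approach the authors report attempting and failing to complete (``the exact form of the relationship between the two weightings has so far eluded us''); in particular the combinatorially weighted boundary maps do not satisfy $\bd^2=0$, so there is no chain-complex gauge transformation available, and no substitution of variables sends the Laurent expression $u_A$ of~\eqref{product-alg-NZE} to the polynomial factor in the conjecture while simultaneously accounting for the stated monomial prefactor.

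Two further specific problems. First, your preliminary claim that every spanning $k$-tree of the $k$-skeleton of $Q_n$ is $\Zz$-torsion-free is unsubstantiated: the long exact sequence of the pair does not rule out torsion in $\HH_{k-1}(Y)$, and the general phenomenon (Bolker, Kalai) is that higher-dimensional spanning trees do carry torsion; without this, Theorem~\ref{thm:WCMTT} computes $\sum_Y |\HH_{k-1}(Y)|^2\prod_{f\in Y_k}\xi_f$ rather than the enumerator in the conjecture, and the discrepancy must be addressed rather than assumed away. Second, your appeal to a Pascal-identity conversion of $\binom{|A|-1}{k-1}$ into $\binom{|A|-2}{k-1}$ via Corollary~\ref{alternating-product} presupposes a closed product formula for the combinatorially weighted $\hat\pi_k(Q_n)$ analogous to~\eqref{product-alg-NZE}; no such formula is established anywhere, and obtaining one is equivalent in difficulty to the conjecture itself. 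In short, the statement remains open, and the proposal identifies the known obstructions without overcoming them.
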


The conjectured formula is similar to equation~\eqref{product-alg-NZE}:
specifically, clearing denominators from the square-bracketed
expression in~\eqref{product-alg-NZE} indexed by~$A\subseteq[n]$
gives the corresponding factor indexed by $A$ in the right-hand side
of the conjecture.  Our original goal in proving formulas such as
Theorem~\ref{cube-weighted-eigenvalue-theorem} was to prove
Conjecture~\ref{conj:new} by translating between the algebraically
and combinatorially weighted Laplacians; this approach had succeeded
in the case of shifted simplicial complexes \cite{DKM}, but
it is not clear how to do that here.

Conjecture~\ref{conj:new} can be verified computationally for small values of
$n$ and $k$.  The case $k=1$ is Theorem~3 of \cite{MR}, and
the case $k=n-1$ can be checked directly, because an $(n-1)$-spanning
tree of $Q_n$ is just a subcomplex generated by all but one of its $(n-1)$-faces.

We suggest a possible avenue for proving  Conjecture~\ref{conj:new}.
First, note that the weighted spanning tree enumerator is homogeneous in the
variable sets $\{x_1,y_1,q_1\},\dots,\{x_n,y_n,q_n\}$, so we lose
no information by setting $q_1=\cdots=q_n=1$ on the right-hand
side, obtaining the simpler formula
$$F(n,k) = \prod_{\substack{A\subseteq[n]\\|A|\geq k+1}} \left[
    \sum_{i\in A} \left((x_i+y_i)\prod_{j\in A\sm i} x_j y_j \right) \right]^{\displaystyle\binom{|A|-2}{k-1}}.$$
Second, we observe that $F(n,k)$ satisfies the recurrence
\begin{equation*}
\label{STrecurrence}
F(n,k) = G([n],n-1,k) F(n,n-1)^{\binom{n-2}{k-1}}
\end{equation*}
where
$$G(S,a,b) = \prod_{A \subseteq S, |A| = a} F(A,b).$$ 

\section{Shifted cubical complexes}
\label{sec:shifted}

Inspired by the notion of shifted simplicial complexes (see, e.g.,
\cite{Kalai_shifting,DR}), we define
the class of shifted cubical complexes.  These shifted cubical
complexes share many properties with their simplicial
counterparts.  In particular, they are Laplacian integral,
constructible from a few basic operations, and arise as order ideals with
respect to a natural relation on direction sets.
The guiding principle
is that \emph{directions} in cubical complexes are analogous to
\emph{vertices} in simplicial complexes.

We first generalize our definitions slightly to be able to work with
cubical complexes on arbitrary direction sets.  That is, if $D$ is
any set of positive integers, a \emph{cubical complex with direction set $D$}
is a family $X$ of ordered tuples $(f_i)_{i\in D}$, where $f_i\in\{0,1,\X\}$ for all~$i$,
closed under replacing $\X$'s with 0's or with 1's.  The direction and dimension of
faces are defined as before: $\dir(f) = \{i\in D \st f_i=\X\}$, and
$\dim f=|\dir(f)|$.  We will frequently need to regard the direction of a face as
a list in increasing order; in this case we write $\dir(f)_<$ instead of
$\dir(f)$.

\begin{definition}
A \emph{shifted cubical complex} is a proper cubical complex
$X\subseteq Q_n$ that satisfies
the following conditions for every $f,g \in \{0,1,\X\}^n$
with $\dim f=\dim g$:

\begin{enumerate}
\item\label{fullskel} $X$ contains the full 1-skeleton of $Q_n$.

\item\label{dironly} If $f \in X$ and $\dir(f) = \dir(g)$, then $g \in X$.

\item\label{cubicalshifted}
If $g \in X$ and $\dir(f)_<$ precedes $\dir(g)_<$ in
component-wise partial order (that is, the $i^{th}$ smallest element
of $\dir(f)_<$ is less than or equal to that of $\dir(g)_<$
for every $i$), then $f \in X$.
\end{enumerate}
\end{definition}

The first condition is analogous to the requirement that a simplicial complex
``on vertex set $V$'' actually contain each member of $V$ as a vertex.
In light of condition~\eqref{dironly}, it would be equivalent to replace~\eqref{fullskel} with the condition that $X$ contains \emph{at least} one edge in every possible direction.
The second condition reflects a symmetry between the digits 0 and 1,
while the last condition is the cubical analogue
of the definition of a shifted simplicial complex (as a complex whose faces,
regarded as collections of vertices, form
an order ideal in component-wise partial order).

\subsection{Near-prisms}
Bj\"orner and Kalai \cite{BK} introduced the concept of near-cones, computed their homotopy type, and showed that shifted simplicial complexes are near-cones.
The cubical counterpart of a near-cone is a near-prism.  We develop the
basic facts about near-prisms in parallel to the presentation
of~\cite[Section 5]{DR}, in order to prove that shifted cubical complexes are Laplacian integral.

Throughout this section, let $X\subseteq\{0,1,\X\}^n$ be a
$d$-dimensional proper cubical complex.
For a direction $i\in[n]$, define the deletion and link with
respect to $i$ as follows:
  \begin{align*}
  \Del{i}{X}  &= \{f\backslash f_i\st f\in X,\ f_i\neq\X\},\\
  \Link{i}{X} &= \{f\backslash f_i\st f\in X,\ f_i =\X\}.
  \end{align*}

The deletion and link in direction $i$ are proper cubical complexes with
direction set $[n]\backslash i$.  Meanwhile, given a complex $X$ on
direction set $[n]\backslash i$, define the \emph{prism of $X$ in
direction $i$} as follows:
$$\Prism_i X = \{(f_1,\dots, f_n) \in \{0,1,\X\}^n
\st f\backslash f_i \in X\}.$$
If $X\subseteq Q_{n-1}$, then the prism $\Prism X$ defined in Section~\ref{sec:prism}
is $\Prism_n X$ in this notation.

Observe that $\Prism_i X$ naturally contains two isomorphic copies of $X$:
one consisting of all faces of $X$ with 0 inserted in the $i^{th}$ digit,
and one consisting of all faces of $X$ with 1 inserted in the $i^{th}$ digit.
We denote these subcomplexes by $\PO^i X$ and $\PI^i X$ respectively.

\begin{definition}
A cubical complex $X$ is a \emph{near-prism in direction i}
if (1) the boundary of every face of $\Del{i}{X}$ is contained in
$\Link{i}{X}$, and (2) both $\PO^i(\Del{i}{X})$ and $\PI^i(\Del{i}{X})$ are contained
in $X$.
\end{definition}

If $X$ is a near-prism in direction $i$, then it admits the decomposition
\begin{equation}
\label{eq:decomp}
X = \PO^i(\Del{i}{X}) \cup \PI^i(\Del{i}{X}) \cup \Prism_i(\Link{i}{X}).
\end{equation}

The following fact is not difficult to prove directly from the definition of a shifted cubical complex.

\begin{lemma}
\label{lemma:near}
$X$ is a shifted cubical complex iff $X$ is a near-prism in direction $1$ and both $\Link{1}{X}$ and $\Del{1}{X}$ are shifted cubical complexes with respect to the direction set $\{2,\ldots,n\}$.  
\end{lemma}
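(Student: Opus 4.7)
The plan is to prove both directions by case analysis on whether position 1 of each face carries the value $\X$, invoking the defining conditions of shiftedness and the near-prism conditions (a), (b).

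For the forward direction, assume $X$ is shifted. Near-prism condition (b) is immediate from shiftedness condition (2), since toggling a non-$\X$ entry between 0 and 1 preserves direction. For condition (a), given $f \in \Del{1}{X}$ with lift $h \in X$ (so $h \sm h_1 = f$ and $h_1 \in \{0,1\}$) and a codim-1 subface $f'$ of $f$, consider the face $k = (\X, f')$, which has $\dim k = \dim h$. Comparing $\dir(k)_< = (1, b_1, \ldots, b_{m-1})$ against $\dir(h)_< = (a_1, \ldots, a_m) \subseteq \{2, \ldots, n\}$ componentwise shows $\dir(k)_<$ precedes $\dir(h)_<$, so shiftedness~(3) yields $k \in X$ and hence $f' \in \Link{1}{X}$. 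The shiftedness of $\Del{1}{X}$ and $\Link{1}{X}$ on direction set $\{2, \ldots, n\}$ is verified by lifting faces via $(0, \cdot)$ and $(\X, \cdot)$ respectively and applying the corresponding condition on~$X$.

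For the backward direction, I verify the three shiftedness conditions for $X$. Condition~(1) follows from the 1-skeleton hypotheses on $\Del{1}{X}$ and $\Link{1}{X}$: edges of $Q_n$ not in direction~1 lift from $\Del{1}{X}$ through $\PO^1, \PI^1 \subseteq X$ (using near-prism~(b)), while edges in direction~1 come from vertices of $\Link{1}{X}$ through $\Prism_1(\Link{1}{X}) \subseteq X$ (which holds for any proper cubical complex, since it is an order ideal). Condition~(2) splits on whether $f_1 = \X$ and invokes condition~(2) on either $\Link{1}{X}$ or $\Del{1}{X}$. For condition~(3), the case $1 \in \dir(g)$, $1 \notin \dir(f)$ is vacuous (the first entry of $\dir(f)_<$ is at least~2 while that of $\dir(g)_<$ equals~1, violating the inequality), and the cases where position~1 agrees between $f$ and $g$ reduce directly to shiftedness~(3) on $\Del{1}{X}$ or $\Link{1}{X}$.

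The main obstacle is the remaining case $1 \in \dir(f)$, $1 \notin \dir(g)$. Write $f' = f \sm f_1$ and $g' = g \sm g_1$, and set $\dir(g')_< = (a_1, \ldots, a_m)$ and $\dir(f')_< = (b_1, \ldots, b_{m-1})$, so the hypothesis $\dir(f)_< \leq \dir(g)_<$ becomes $b_i \leq a_{i+1}$ for all $i$. Let $j^*$ be the smallest index with $b_{j^*} > a_{j^*}$, taking $j^* = m$ if no such index exists, and let $g''$ be the codim-1 subface of $g'$ obtained by replacing the $\X$ in position $a_{j^*}$ with $0$. Then $\dir(g'')_< = (a_1, \ldots, a_{j^*-1}, a_{j^*+1}, \ldots, a_m)$, which by the choice of $j^*$ componentwise dominates $(b_1, \ldots, b_{m-1}) = \dir(f')_<$. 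Near-prism~(a) gives $g'' \in \Link{1}{X}$, shiftedness of $\Link{1}{X}$ then yields $f' \in \Link{1}{X}$, and we conclude $f \in \Prism_1(\Link{1}{X}) \subseteq X$.
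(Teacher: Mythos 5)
The paper gives no proof of this lemma (it is dismissed as ``not difficult to prove directly from the definition''), so there is no argument of the authors' to compare against; your case analysis on whether position~$1$ carries $\X$ is the natural direct proof. Most of it is correct, and the one genuinely delicate step --- condition~(3) for $X$ in the case $1\in\dir(f)$, $1\notin\dir(g)$ --- is handled exactly right: passing to the codimension-one subface $g''$ of $g'=g\sm g_1$ obtained by deleting the direction $a_{j^*}$, where $j^*$ is the first index with $b_{j^*}>a_{j^*}$, is precisely what lets near-prism condition~(a) and the shiftedness of $\Link{1}{X}$ mesh, and your verification that $\dir(f')_<$ precedes $\dir(g'')_<$ componentwise is correct.

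There is, however, one real gap, in the forward direction: the claim that condition~(1) (the full $1$-skeleton) for $\Link{1}{X}$ ``is verified by lifting faces via $(\X,\cdot)$ and applying the corresponding condition on $X$.'' The lift of an \emph{edge} $e$ of $Q_{\{2,\dots,n\}}$ is the $2$-face $(\X,e)$ of $Q_n$, and condition~(1) for $X$ says nothing about $2$-faces. In fact the assertion fails as literally stated: take $X$ to be the full $1$-skeleton of $Q_n$, which satisfies all three conditions of shiftedness; then $\Link{1}{X}$ consists only of the vertices of $Q_{\{2,\dots,n\}}$ and contains no edges at all. More generally, $\Link{1}{X}$ has an edge in direction $j$ exactly when $X$ has a $2$-face whose direction set contains some $b\geq j$, so the link contains the full $1$-skeleton of $Q_{\{2,\dots,n\}}$ only when $X$ has a $2$-face whose direction contains~$n$. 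So this is not merely an omission in your write-up but an imprecision in the lemma itself: the link should be declared shifted with respect to its own (possibly smaller, initial-segment) direction set, or condition~(1) should be relaxed for the link. You should flag this and state the corrected version rather than assert that the lifting argument goes through. Everything else --- near-prism conditions~(a) and~(b), shiftedness conditions~(2) and~(3) for the deletion and link, and the entire backward direction --- is sound.
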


In particular, shifted cubical complexes are
iterated near-prisms.  As in~\cite{DR}, this characterization
will help describe the Laplacian eigenvalues of a shifted cubical
complex.

First, we introduce notation to work with weakly decreasing sets
of nonnegative integers, defined up to $\zeq$-equivalence (i.e.,
with an indeterminate number of trailing zeroes).
The symbol $2^m$ will denote the sequence $(2,2,\dots,2)$ of length~$m$.
If ${\bf a} = (a_1\geq a_2\geq\ldots)$
and ${\bf b}= (b_1\geq b_2\geq\ldots)$
are two sequences, then ${\bf a} + {\bf b}$ denotes
the sequence $(a_1 + b_1 \geq a_2 + b_2 \geq \ldots)$.  In
particular, $2^m + {\bf a}$ is the sequence derived from ${\bf a}$
by adding 2 to each of the first $m$ entries of ${\bf a}$, padding
the end of ${\bf a}$ with 0's if necessary.  For instance,
$2^8 + (7,5,5,2,1) \zeq (9,7,7,4,3,2,2,2)$.

Let ${\bf{s}}(X)$ denote the sequence of non-zero
eigenvalues of $\Lud_{X,d-1}$, written in weakly decreasing order.

\begin{theorem}
\label{thm:near-prism-del-link}
Let $X^d$ be a pure cubical complex which is a near-prism in direction $1$, and
let $\ell$ denote the number of facets of $\Link{1}{X}$.
If $X$ is also a prism, then 
$${\bf{s}}(X) =  2^\ell +  {\bf{s}}(\Link{1}{X}).$$
Otherwise,
$${\bf{s}}(X) = {\bf{s}}(\Del{1}{X}) ~\cup~ (2^\ell + ({\bf{s}}(\Del{1}{X}) \cup {\bf{s}}(\Link{1}{X}))).$$
\end{theorem}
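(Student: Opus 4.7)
The plan is to block-decompose the top boundary map $\bd_{X,d}$ using the near-prism decomposition $X = \PO^1 Y \cup \PI^1 Y \cup \Prism_1 L$, where $Y = \Del{1}{X}$ and $L = \Link{1}{X}$. Partitioning the $d$-cells (and $(d-1)$-cells) of $X$ according to the value of $f_1 \in \{0,1,\X\}$ and writing $M = \bd_{Y,d}$, $N = \bd_{L,d-1}$, and $J\colon L_{d-1} \hookrightarrow Y_{d-1}$ for the natural inclusion, a direct sign computation yields
\[
\bd_{X,d} = \begin{bmatrix} M & 0 & -J \\ 0 & M & J \\ 0 & 0 & -N \end{bmatrix}.
\]
An orthonormal change of basis on $C_d(X)$ and $C_{d-1}(X)$ using the symmetric/antisymmetric combinations $\frac{1}{\sqrt{2}}((0,a)\pm(1,a))$ makes this matrix block upper-triangular, and $\Lud_{X,d-1}$ decomposes as an orthogonal direct sum of a ``symmetric'' block $MM^T$ and an ``antisymmetric'' block $EE^T$, where $E = \left[\begin{smallmatrix} M & \sqrt{2}\,J \\ 0 & -N \end{smallmatrix}\right]$.

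In the prism case $X = \Prism_1 L$ we have $Y = L$ and $\dim Y = d-1$, so $M$ is empty; the symmetric block vanishes, $E = \left[\begin{smallmatrix} \sqrt{2}\,I \\ -N \end{smallmatrix}\right]$, and $E^TE = 2I + N^TN$ has nonzero spectrum exactly $2^\ell + \mathbf{s}(L)$, proving the prism formula. Otherwise $\dim Y = d$, the symmetric block equals $\Lud_{Y,d-1}$ and contributes exactly $\mathbf{s}(Y)$ to $\mathbf{s}(X)$, and it remains to show that the nonzero spectrum of $EE^T$ equals $2^\ell + (\mathbf{s}(Y)\cup\mathbf{s}(L))$. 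The crucial input from the near-prism hypothesis is the inclusion $\im(M)\subseteq J(L_{d-1})$, which lets us factor $M = J\tilde M$ for a reduced boundary $\tilde M\colon C_d(Y)\to C_{d-1}(L)$. Then $E = \iota F$ where $\iota = \left[\begin{smallmatrix} J & 0 \\ 0 & I \end{smallmatrix}\right]$ is an isometric embedding ($\iota^T\iota = I$) and $F = \left[\begin{smallmatrix} \tilde M & \sqrt{2}\,I \\ 0 & -N \end{smallmatrix}\right]$, so the nonzero spectra of $EE^T$ and $FF^T$ coincide.

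The second key observation is that $N\tilde M = 0$: it follows from the identity $\bd_{Y,d-1}\circ\bd_{Y,d} = 0$ together with the fact that $\bd_{L,d-1}$ is the restriction of $\bd_{Y,d-1}$ to $L$ (the sign conventions agree on the direction set $[n]\sm\{1\}$). Consequently $\im(\tilde M)\perp\im(N^T)$ inside $L_{d-1}$, producing an orthogonal decomposition
\[
L_{d-1} = \im(\tilde M)\dsum\im(N^T)\dsum(\ker\tilde M^T\cap\ker N)
\]
of dimensions $|\mathbf{s}(Y)|$, $|\mathbf{s}(L)|$, and $\ell - |\mathbf{s}(Y)| - |\mathbf{s}(L)|$. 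Expressed in the induced refined basis of $L_{d-1}\dsum L_{d-2}$, the matrix $FF^T = \left[\begin{smallmatrix} \tilde M\tilde M^T + 2I & -\sqrt{2}\,N^T \\ -\sqrt{2}\,N & NN^T \end{smallmatrix}\right]$ splits orthogonally into three pieces whose spectra I can read off: on $\im(\tilde M)$ it is $\tilde M\tilde M^T|_{\im\tilde M} + 2I$ with nonzero spectrum $\mathbf{s}(Y) + 2$; on $\im(N^T)\dsum L_{d-2}$ a further $2\times 2$ block diagonalization keyed to the singular values $\sigma_i^2 = \nu_i\in\mathbf{s}(L)$ of $N$ (with each $2\times 2$ block $\left[\begin{smallmatrix} 2 & -\sqrt{2}\sigma_i \\ -\sqrt{2}\sigma_i & \sigma_i^2 \end{smallmatrix}\right]$ having eigenvalues $0$ and $\nu_i + 2$) contributes spectrum $\mathbf{s}(L) + 2$ plus matching zeros; and on $\ker\tilde M^T\cap\ker N$ it is just $2I$, contributing $\ell - |\mathbf{s}(Y)| - |\mathbf{s}(L)|$ eigenvalues equal to $2$. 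These $\ell$ nonzero eigenvalues assemble to exactly $2^\ell + (\mathbf{s}(Y)\cup\mathbf{s}(L))$, which combined with the $\mathbf{s}(Y)$ from the symmetric block yields the theorem. The main obstacle is this orthogonal-decomposition step: one must extract both algebraic identities ($M = J\tilde M$ and $N\tilde M = 0$) from the near-prism hypothesis, and then carefully bookkeep the three spectral contributions, in particular the ``extra'' $2$'s coming from the intersection $\ker\tilde M^T\cap\ker N$.
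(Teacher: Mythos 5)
Your proof is correct, and it reaches the theorem by a genuinely more hands-on route than the paper does. The paper's argument also starts from the near-prism decomposition \eqref{eq:decomp}, but its only real step is the observation that a pure near-prism agrees with $\Prism_1(\Del{1}{X})$ in dimensions $d$ and $d-1$ (with $\Prism_1(\Link{1}{X})$ in the prism case); it then simply invokes the prism recurrence \eqref{prism:sud} of Theorem~\ref{prism-spectrum} --- itself a consequence of the tensor-product formula of Theorem~\ref{product-laplacian} --- together with $\stot_{d-1}\zeq\sud_{d-1}\cup\,\sud_{d-2}$, and is done in four lines. You instead re-derive exactly the needed instance of that recurrence from scratch: the symmetric/antisymmetric change of basis splits off $\Lud_{\Del{1}{X},d-1}$, and the two identities you extract from the hypotheses --- $M=J\tilde M$ from near-prism condition (1), and $N\tilde M=0$ from $\partial^2=0$ restricted to the link --- let you diagonalize the remaining block explicitly via the singular values of $N$. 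Your version buys transparency: it exhibits the eigenvectors, shows exactly where each hypothesis enters, and accounts directly for the multiplicity $\ell$ of the shift by $2$, including the ``extra'' $2$'s on $\ker\tilde M^T\cap\ker N$, which the paper obtains only implicitly through the $\stot\to\sud$ bookkeeping. The cost is length and the sign- and SVD-chasing that the citation of Theorem~\ref{prism-spectrum} packages away. One small remark: for a \emph{pure} near-prism one can check that $(\Del{1}{X})_{d-1}=(\Link{1}{X})_{d-1}$ (the paper uses this tacitly when it writes $X'_{d-1}=\Link{1}{X}$), so your inclusion $J$ is in fact the identity and the isometric-embedding step $E=\iota F$ could be skipped; your more careful treatment is harmless and would be what one needs if purity were relaxed.
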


\begin{proof}
First, suppose that $X$ is not a prism.  Then $\dim(\Del{1}{X})=d$ and
$\dim(\Link{1}{X})=d-1$. Let $X' = \Del{1}{X}.$
Since $X$ is a pure near-prism, we see that 
  $$X'_d = \Del{1}{X} \, \textrm{ and } \,  X'_{d-1} =  \Link{1}{X}.$$
So $(\Prism_1X')_d = \PO^1\Del{1}{X} \cup \PI^1\Del{1}{X} \cup
\Prism_1 \Link{1}{X} = X$ by equation~\eqref{eq:decomp} above.
Therefore
  \begin{align}
  \sud_{d-1}(X) &= \sud_{d-1}(\Prism X')\notag\\
  &\zeq \sud_{d-1}(X') ~\cup~ (2^{|X'_{d-1}|} + \stot_{d-1}(X'))\notag\\
  &\zeq \sud_{d-1}(X') ~\cup~ (2^{|X'_{d-1}|} + (\sud_{d-1}(X') ~\cup~  \sud_{d-2}(X')))\label{eq:stoppingpoint}\\
  &= \sud_{d-1}(\Del{1}{X}) ~\cup~ (2^{|(\Link{1}{X})_{d-1}|} + (\sud_{d-1}(\Del{1}{X}) \cup \sud_{d-2}(\Link{1}{X}))),\notag
  \end{align}
where the second line uses~\eqref{prism:sud}, and
all integer sequences are listed in weakly decreasing order.

The proof is similar if $X$ is a prism.  In this case, $X=\Prism X'$ where $\dim X' = d-1$, and $X'=\Link{1}{X}$, so again $X'_{d-1}=\Link{1}{X}$.  Equation~\eqref{eq:stoppingpoint} then applies again, but $\sud_{d-1}(X')$ consists of all 0's, because $\dim X' = d-1$.  Therefore
$$
\sud_{d-1}(X) \zeq 2^{|(\Link{1}{X})_{d-1}|} + \sud_{d-2}(\Link{1}{X}).
$$
\end{proof}

\begin{corollary}\label{cor:shifted.integral}
Shifted cubical complexes are Laplacian integral.
\end{corollary}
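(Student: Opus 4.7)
The plan is a double induction: the outer induction is on the number of directions $n$, and for fixed $n$ the inner induction is on the dimension $d$ of $X$. The base cases $n\le 1$ follow immediately, since the only shifted cubical complexes involved are points and $Q_1$, whose Laplacian spectra are integer by Example~\ref{q1-unweighted}.

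For the inductive step, let $X^d$ be a shifted cubical complex on $n\ge 2$ directions. By Lemma~\ref{lemma:near}, $X$ is a near-prism in direction $1$ and both $\Del{1}X$ and $\Link{1}X$ are shifted cubical complexes on the $n-1$ directions $\{2,\dots,n\}$, so the outer inductive hypothesis gives that each of them is Laplacian integral. I then show $\sud_k(X)\subset\Zz$ for every $0\le k\le d$. The cases $k\ge d$ are trivial, since $\Lud_{X,k}=0$ when $k\ge d$. For $k=d-1$, Theorem~\ref{thm:near-prism-del-link} writes ${\bf s}(X)$ as either $2^\ell + {\bf s}(\Link{1}X)$ in the prism case, or as ${\bf s}(\Del{1}X)\cup\bigl(2^\ell + ({\bf s}(\Del{1}X)\cup {\bf s}(\Link{1}X))\bigr)$ otherwise; each ingredient is integer by the outer induction, and the componentwise shift by $2$ preserves integrality. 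For $k<d-1$, I use the identity $\sud_k(X) = \sud_k(X_{(k+1)})$: the skeleton $X_{(k+1)}$ is again a shifted cubical complex on $n$ directions (conditions~\eqref{fullskel}--\eqref{cubicalshifted} only compare faces of equal dimension and hence restrict to every skeleton) of dimension $k+1<d$, so the inner inductive hypothesis finishes the step.

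The chief technical obstacle is that Theorem~\ref{thm:near-prism-del-link} assumes $X$ is pure, whereas shifted cubical complexes are in general not pure. I would circumvent this by working instead with the pure subcomplex $X^{*} := \bigcup_{f\in X_d}\overline{f}$ generated by the $d$-cells: any $(d-1)$-face of $X$ not contained in some $d$-face contributes only zero rows and columns to $\Lud_{X,d-1}$, so $\sud_{d-1}(X)\zeq\sud_{d-1}(X^{*})$, and integrality of one gives integrality of the other. The delicate remaining point is to verify that $X^{*}$ is still a near-prism in direction~$1$ and that its link and deletion fall within the scope of the outer inductive hypothesis---either by checking they are themselves shifted on $n-1$ directions, or by arguing directly that Laplacian integrality is preserved under passage to $X^{*}$ and then to its link and deletion.
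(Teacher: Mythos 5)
Your proposal is correct and follows essentially the same route as the paper: induction on the number of directions, reduction of each $\sud_{j-1}(X)$ to the corresponding spectrum of the pure $j$-dimensional skeleton (your $X^{*}$, applied at every dimension via your inner induction), and then the near-prism recursion of Theorem~\ref{thm:near-prism-del-link} on that pure complex, with Lemma~\ref{lemma:near} feeding the inductive hypothesis. The ``delicate remaining point'' you flag---that the pure skeleton is still a near-prism in direction $1$ whose link and deletion fall under the inductive hypothesis---is exactly the step the paper's own proof also passes over without verification, so your proposal matches the published argument in both substance and level of detail.
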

\begin{proof}
We proceed by induction on the number of directions.  If a shifted
cubical complex~$X$ has only one direction, then $X=Q_1$, which is
Laplacian integral by Theorem~\ref{cube-eigenvalue-theorem}.

Now let $X$ be a $d$-dimensional shifted cubical complex on more than
one direction.
Let $Y$ be the \emph{$j$-dimensional pure skeleton of $X$}, i.e., the subcomplex
consisting of the $j$-dimensional faces of $X$ and all of their subfaces.  It is clear that 
$\sud_{j-1}(X) \zeq \sdu_{j}(X) \zeq \sdu_{j}(Y) \zeq \sud_{j-1}(Y)$, since
$\sdu_{j}$ depends only on $X_j$ and $X_{j-1}$.  Therefore, we only need to show that
$Y$ is Laplacian integral, but $Y$ is pure by definition, so we may apply
Theorem~\ref{thm:near-prism-del-link}.  By induction, $\Del{1}{Y}$ and 
$\Link{1}{Y}$ are Laplacian integral, and we are done.
\end{proof}

In the case of a shifted simplicial complex $\Delta$, it is shown in~\cite{DR}
that $\sud(\Delta) = d^T(\Delta)$, where $d^T$ is the transpose of the
vertex-facet degree sequence, and that $d^T$
satisfies the simplicial analogue of the recursion
of Theorem~\ref{thm:near-prism-del-link}.

\begin{problem}  
 Is there an analogous notion of degree sequence for
cubical complexes that is related to the Laplacian spectrum?
\end{problem}

\subsection{Mirroring}
Let $\Delta$ be a simplicial complex on vertex set $[n]$.  The \emph{mirror} of
$\Delta$ is the cubical complex
  $$M(\Delta) = \{f\in Q_n \st \dir(f)\in\Delta\}.$$
Mirroring was first used by Coxeter~\cite{Coxeter} to study certain generalizations
of regular polytopes;
see~\cite[section 2.1]{BBC} for a nice summary of its history and uses.
It is not hard to see that the mirroring operator takes the class of shifted
simplicial complexes to the class of shifted cubical complexes.  
Mirroring also behaves nicely with respect to other basic operations on
simplicial and cubical complexes:
\begin{enumerate}
\item $M(\textrm{Cone}(\Delta)) = \Prism (M(\Delta))$;
\item $M(\Delta_k) = (M(\Delta))_k$;
\item $M(\Del{i}{\Delta}) = \Del{i}{M(\Delta)}$;
\item $M(\Link{i}{\Delta}) = \Link{i}{M(\Delta)}$;
\item $M(\bnd \Delta) = \bnd M(\Delta)$.
\end{enumerate}
Here $\del$ and $\link$  have their usual meanings for simplicial complexes, i.e.,
$$\Del{i}{\Delta} =\{\sigma\sm i \st \sigma\in\Delta\}, \qquad
\Link{i}{\Delta} =\{\sigma\sm i \st \sigma\in\Delta,\ i\in\sigma\},
$$
and $\bnd X$ denotes the union of the boundary faces of every face of $X$
(equivalently, the set of non-maximal faces of~$X$).

Furthermore, mirroring takes near-cones to near-prisms in the following
sense.
A simplicial near-cone $\Delta$ with apex $i$ has the property that
$\bnd(\Del{i}{\Delta}) \subseteq \Link{i}{\Delta}$.  In this case, 
$$
\bnd(\Del{i}{M(\Delta)}) = \bnd(M(\Del{i}{\Delta})) = M(\bnd(\Del{i}{\Delta}))
\subseteq M(\Link{i}{\Delta}) = \Link{i}{M(\Delta)},
$$
and so we may construct a near-prism (as in \eqref{eq:decomp})
from the cubical complexes
$\Del{i}{M(\Delta)}$ and $\Link{i}{M(\Delta)}$.

Unfortunately, mirroring does not seem to behave nicely with respect to trees
or Laplacian eigenvalues.  For instance, mirroring does not preserve the APC property.
Thus, the mirror of a simplicial complex with spanning trees will not
necessarily have a spanning tree.  Even the mirror of a pure shifted
simplicial complex is not necessarily APC.  
(For example, let $\Delta$ be the graph with vertices $1,2,3$ and
edges $12,13$.  Then $\Delta$ is contractible, but $M(\Delta)$ is
the prism over an empty square --- a 2-dimensional cell complex that is
homotopy equivalent to a circle, hence not APC.)  Mirroring also does not in general 
preserve the property of being Laplacian integral.  (For example,
the pure 2-dimensional complex $\Delta$ with vertices $1,2,3,4,5$
and facets $124,125,134,135$ is a matroid complex, hence
Laplacian integral by \cite{KRS}, but $M(\Delta)$ is not Laplacian
integral.)

Although mirroring does not appear useful for enumerating spanning trees, or computing
eigenvalues, it is possible that there are still interesting things to explore, such
as the relations between eigenvalues of a simplicial complex and its mirror, perhaps
especially in special cases such as shifted or near-cone/near-prism complexes.

\subsection{Homotopy type}
Bj\"orner and Kalai \cite{BK} proved that a simplicial near-cone is always homotopy equivalent
to a wedge of spheres, and that the number of spheres
in each dimension is easily determined from its combinatorial structure. An immediate consequence
is that shifted simplicial complexes are homotopy equivalent to wedges of spheres, with the number of
spheres in each dimension again easy to describe.

Given a near-prism $X$, define 
$$
B_i(X) = \Del{i}{X} \setminus \Link{i}{X}.
$$
By the definition of near-prism, every face in $\PO^i B_i(X)$ or in $\PI^i B_i(X)$ is a facet of $X$.

\begin{conjecture}
\label{near-prism-homotopy-formula}
If $X$ is a near-prism in direction $i$, and $\Del{i}{X}$ is homotopy equivalent to a wedge of spheres,
$$
\Del{i}{X} \simeq \bigvee_j S_j^{r_j},
$$
then $X$ is homotopy equivalent to a wedge of spheres.  Specifically,
$$
X \simeq \bigvee_j S_j^{r_j + c_{i,j}},
$$
where $c_{i,j}$ is the number of $j$-dimensional cells in $B_i(X)$.
\end{conjecture}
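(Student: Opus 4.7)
The plan is to show $X \simeq X' \cup_L X'$, where $X' = \Del{i}{X}$ and $L = \Link{i}{X}$, and then to exhibit this double as $X'$ with one extra sphere wedged on for each face in $B_i(X)$.

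From the near-prism decomposition~\eqref{eq:decomp}, $X$ is glued from $\PO^i X'$, $\PI^i X'$, and $\Prism_i L$ along copies of $L$. The middle piece $\Prism_i L$ is combinatorially $L \times Q_1$, i.e., the double mapping cylinder of the two identity maps $L \to L$. Since the inclusions $L \hookrightarrow X'$ are cofibrations (being inclusions of CW subcomplexes), a standard double-mapping-cylinder argument shows that $X$ is homotopy equivalent to the pushout $X' \cup_L X'$. I would deduce this concretely by deformation-retracting the $Q_1$ coordinate of $\Prism_i L$ onto a single end, collapsing $\Prism_i L$ onto $\PO^i L$ and identifying $\PI^i L$ with $\PO^i L$; the two copies of $X'$ are then glued along $L$.

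Next I view $X' \cup_L X'$ as built from the first copy of $X'$ by attaching, for each $\sigma \in B_i(X) = X' \setminus L$ of dimension $j$, a new $j$-cell via the attaching map $\partial \sigma \hookrightarrow L \hookrightarrow X'$. The near-prism hypothesis ensures $\partial \sigma \subset L$ for every $\sigma \in B_i(X)$, and also forces no cell of $B_i(X)$ to be a face of another cell of $B_i(X)$ (any such face would itself lie in $L$, contradicting its membership in $B_i(X)$). The key observation is that each attaching map $\partial \sigma \colon S^{j-1} \to X'$ is nullhomotopic in $X'$, because the cell $\sigma$ itself, viewed via its characteristic map as a disk in $X'$, provides the nullhomotopy. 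Using the standard fact that attaching a $j$-cell to $Y$ along a nullhomotopic map yields $Y \vee S^j$, and that nullhomotopies in $X'$ persist in any space containing $X'$, I enumerate $B_i(X) = \{\sigma_1, \dots, \sigma_m\}$ and attach the cells one at a time: after the $k$-th attachment we have $X' \vee S^{\dim \sigma_1} \vee \cdots \vee S^{\dim \sigma_k}$, and after all $m$ attachments, $X' \cup_L X' \simeq X' \vee \bigvee_{\sigma \in B_i(X)} S^{\dim \sigma}$.

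Substituting the hypothesis $X' \simeq \bigvee_j S_j^{r_j}$ and grouping the added spheres by dimension via $c_{i,j} = \#\{\sigma \in B_i(X) \st \dim \sigma = j\}$ yields the desired wedge decomposition $X \simeq \bigvee_j S_j^{r_j + c_{i,j}}$. The most delicate part of this plan is the homotopy equivalence $X \simeq X' \cup_L X'$, since $\Prism_i L$ is generally not contractible (it deformation-retracts onto $L$, which may itself have nontrivial topology); the argument via double mapping cylinders and homotopy pushouts along cofibrations is where the real work happens. Once that identification is in hand, the remainder is a routine iteration of the elementary observation that attaching a cell along a nullhomotopic map wedges on a sphere.
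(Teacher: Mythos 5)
You should first be aware that the paper offers no proof of this statement: it is posed as an open conjecture, and the authors only report (via Eran Nevo) a confirmation in the special case of shifted cubical complexes. So there is no ``paper's proof'' to compare against, and your proposal must be judged on its own. Its first half is sound: $X$ really is the double mapping cylinder of $\Del{i}{X} \hookleftarrow \Link{i}{X} \hookrightarrow \Del{i}{X}$, the legs are CW cofibrations, so $X \simeq X' \cup_L X'$ with $X'=\Del{i}{X}$, $L=\Link{i}{X}$; and condition (1) of the near-prism definition does guarantee that every cell of $B_i(X)$ is a facet of $X'$ with its entire boundary in $L$, so $X'\cup_L X'$ is $X'$ with one new cell attached along a nullhomotopic map for each cell of $B_i(X)$.

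The gap is in the step ``attaching a cell along a nullhomotopic map yields $Y \vee S^j$, hence after all $m$ attachments we get $X' \vee \bigvee_\sigma S^{\dim\sigma}$.'' Each new sphere is wedged on at a point of the path component of $X'$ into which that particular attaching map contracts; different cells of $B_i(X)$ may land in different components, and then the resulting spheres cannot be gathered into a single wedge. In fact the conjecture as stated fails here. Take $X \subseteq Q_3$ consisting of all $8$ vertices together with the $8$ edges $(\X,b,c)$ and $(a,b,\X)$ for $a,b,c\in\{0,1\}$ (no edges in direction $2$, no $2$-cells). One checks directly that $X$ is a near-prism in direction $1$: $\Del{1}{X}$ is the two disjoint edges $(0,\X)$, $(1,\X)$ plus the four vertices, $\Link{1}{X}$ is the four vertices, conditions (1) and (2) hold, and $B_1(X)$ consists of the two edges, so $c_{1,1}=2$ and $c_{1,0}=0$. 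Here $\Del{1}{X}\simeq S^0$, so the conjecture (and your argument) would give $X \simeq S^0 \vee S^1 \vee S^1$; but $X$ is two disjoint $4$-cycles, i.e.\ $S^1 \sqcup S^1$, which is not homotopy equivalent to any wedge of spheres (it has two non-contractible components). In your argument, the two $1$-cells of $B_1(X)$ have nullhomotopic attaching maps that contract into the two different components of $X'$, so the two circles appear in different components of the result. Your proof (and the conjecture) does go through when $\Del{i}{X}$ is connected --- which holds for shifted cubical complexes, since they contain the full $1$-skeleton --- but as written the connectivity hypothesis is missing, and without it the statement itself is false, not merely the proof incomplete.
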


\begin{problem}
\label{homotopy_problem}
Assuming the truth of Conjecture~\ref{near-prism-homotopy-formula},
give a combinatorial formula for the homotopy type of a shifted cubical complex.
\end{problem}
Eran Nevo \cite{Nevo} recently showed us a solution to Problem \ref{homotopy_problem}.  In particular, his combinatorial formula confirms Conjecture \ref{near-prism-homotopy-formula} when $X$ is a shifted cubical complex.

\subsection{Extremality}

Shifted simplicial complexes derive their name from the existence of
shifting operators which associate a shifted complex to any simplicial
complex.  Shifted simplicial complexes exhibit certain extremality 
properties with respect to invariants such as $f$-vectors, Betti numbers,
degree sequence and Laplacian eigenvalues: see, e.g., \cite{DR,Kalai_shifting,KR}.  In~\cite{BK},
Bj\"{o}rner and Kalai pose the problem of developing shifting operators
for arbitrary polyhedral complexes.

\begin{problem}
Is there a natural notion of cubical shifting which associates a
shifted cubical complex to an arbitrary cubical complex?  In what
ways are shifted cubical complexes extremal in the class of
all (proper) cubical complexes?
\end{problem}

\section{Duality}
\label{sec:duality}

In this section we examine pairs of dual complexes.  We now extend the definition of cell complex to allow the possibility that the complex contains the empty set as a $(-1)$-dimensional face.  We assume the
reader is familiar with the basics of matroid theory and refer to
\cite{Oxley} for an excellent reference.

Let $X$ and $Y$ be cell complexes.  We say that $X$ and $Y$ are \emph{dual} if there is an inclusion-reversing bijection $f\mapsto f^*$
from the cells of~$X$ to the cells of $Y$ such that, for some $d$, $\dim f+\dim f^*=d$
for every $f$.  (Necessarily, then, $\dim X = d+1$ if $Y$ contains the empty set as a face, and $\dim X = d$ otherwise; similarly, $\dim Y = d+1$ if $X$ contains the empty set as a face, and $\dim Y = d$ otherwise.)  We also require that the boundary maps of $X$ equal the coboundary maps of $Y$:
that is, if we extend the duality bijection to all cellular chains by linearity, then
  $$\bd_X(f)=\cbd_Y(f^*)$$
or, more specifically,
  \begin{align*}
  \bd_{X,i}\colon C_i(X)\to C_{i-1}(X) &\quad=\quad \cbd_{Y,d-i+1}\colon C_{d-i}(Y)\to C_{d-i+1}(Y) \quad\text{and}\\
  \cbd_{X,i}\colon C_i(X)\to C_{i-1}(X) &\quad=\quad \bd_{Y,d-i+1}\colon C_{d-i}(Y)\to C_{d-i+1}(Y).\end{align*}
These equalities imply immediately that
  \begin{equation} \label{duality-formula}
  \Lany_{X,i} = \Lany_{Y,d-i},\qquad
  \Eany_i(X;\q) = \Eany_{d-i}(Y;\q),
  \end{equation}
where $\bullet\in\{\text{ud},\text{du},\text{tot}\}$. 

\begin{proposition}
\label{dual-complexes}
Suppose that $X^d$ and $Y^d$ are dual complexes.  Let
$T\subseteq X_i$ and let $U=\{f^* \st f\in X_i\sm T\}$.
Then the subcomplex $X_T=T\cup X_{i-1}$ is an $i$-spanning tree
of $X$ if and only if the subcomplex $Y_U=U\cup Y_{d-i-1}$ 
is a $(d-i)$-spanning tree of $Y$.
\end{proposition}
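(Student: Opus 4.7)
The approach is to recast both CST conditions as matroid-basis conditions on the common ground set $X_i \leftrightarrow Y_{d-i}$, and then invoke matroid duality. Set $k=d-i$ throughout.

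First, I will reformulate the CST condition linear-algebraically: working over $\Qq$, $X_T\in\CST_i(X)$ if and only if the $T$-indexed columns of $\bdo_{X,i}$ form a basis of the column space $\im\bdo_{X,i}$. The conditions $\HH_i(X_T)=0$ and $|\HH_{i-1}(X_T)|<\infty$ translate, respectively, to linear independence of these columns and to equality of their span with $\im\bdo_{X,i}$; the $\Zz$-level conditions match their $\Qq$-level counterparts because $\HH_i(X_T)$ is a subgroup of the free group $C_i(X_T)$ (hence torsion-free) and $\HH_{i-1}(X_T)$ is finite iff of rank zero. Analogously, $Y_U\in\CST_k(Y)$ iff $U$ indexes a basis of the column matroid of $\bdo_{Y,k}$.

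Second, I will transport the $Y$-side matroid to the ground set $X_i$ via duality. The hypothesis $\bdo_X=\cbd_Y$ yields, after a shift of indices, $\bdo_{Y,k}=\cbd_{X,i+1}$ under the identifications $Y_k\leftrightarrow X_i$ and $Y_{k-1}\leftrightarrow X_{i+1}$. Since the footnote's description $U=\{f^*\st f\notin X_T\}$ shows that $U$ corresponds to $X_i\sm T$ under the duality bijection, the claim becomes: $T$ indexes a basis of the column matroid of $\bdo_{X,i}$ if and only if $X_i\sm T$ indexes a basis of the column matroid of $\cbd_{X,i+1}$.

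Finally, I will deduce this from matroid duality on $X_i$. Writing $V_S\subseteq C_i(X;\Qq)$ for the coordinate subspace indexed by $S\subseteq X_i$, the two conditions say respectively that $V_T$ is complementary to $\ker\bdo_{X,i}$ and that $V_{X_i\sm T}$ is complementary to $\ker\cbd_{X,i+1}$. The Hodge decomposition $C_i(X;\Qq)=\im\bdo_{X,i+1}\oplus\ker\Ltot_i\oplus\im\cbd_{X,i}$ identifies $\ker\bdo_{X,i}=\im\bdo_{X,i+1}\oplus\ker\Ltot_i$ and $\ker\cbd_{X,i+1}=\ker\Ltot_i\oplus\im\cbd_{X,i}$, and the pairwise orthogonality of the three Hodge summands, together with the fact that $V_T$ and $V_{X_i\sm T}$ are orthogonal complements in $C_i(X;\Qq)$, implies that these two complementarity conditions are equivalent. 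The principal obstacle I anticipate is the dimension count in the presence of a nontrivial harmonic space $\ker\Ltot_i$: the cardinality requirements for the two CSTs, combined with the Hodge decomposition, force $\betti_i(X)=0$ precisely when the proposition is nonvacuous, and in the corner case $\betti_i(X)>0$ one must verify that both CST conditions then fail on cardinality grounds, so that the stated equivalence holds trivially.
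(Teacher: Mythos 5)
Your proposal follows essentially the same route as the paper's proof: both reduce the statement to the assertion that the column matroid of $\bd_{X,i}$ (ground set $X_i$) and the column matroid of $\cbd_{X,i+1}=\bd_{Y,d-i}$ are dual, and both establish this by observing that the two determining subspaces of $C_i(X;\Qq)$ --- namely $\im\cbd_{X,i}$ and $\im\bd_{X,i+1}$ --- are orthogonal and of complementary dimension. The paper does the dimension count directly and cites Oxley; you package the same facts into the Hodge decomposition. You are also right that everything hinges on the harmonic space $\ker\Ltot_i$, i.e.\ on $\betti_i(X;\Qq)=0$: orthogonality is automatic from $\bd\bd=0$, but complementarity of dimensions is exactly the identity $\ker\bd_{X,i}\otimes\Qq=\im\bd_{X,i+1}\otimes\Qq$. (The paper's own rank computation silently uses this identity.)

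The gap is in your proposed resolution of that corner case. It is not true that when $\betti_i(X;\Qq)>0$ ``both CST conditions fail on cardinality grounds'': whether $i$-CSTs of $X$ exist is governed by $\betti_{i-1}$ of the $i$-skeleton, not by $\betti_i$. Concretely, let $X=Y$ be a torus with a self-dual CW structure (say the $3\times 3$ grid of squares) and take $i=1$. Then $\betti_1(X;\Qq)=2$, yet $X_T$ is a $1$-CST for every spanning tree $T$ of the connected $1$-skeleton, with $|T|=8$ of the $18$ edges; the complementary set $U$ has $10$ elements, so $Y_U$ contains a cycle and is not a $1$-CST of $Y$. Thus in the corner case the asserted equivalence genuinely fails in one direction rather than holding vacuously; the hypothesis $\betti_i(X;\Qq)=0$ (satisfied in all the applications in the paper) is needed, and your argument cannot be closed as written without it. A smaller point of the same flavor: your translation of $|\HH_{i-1}(X_T)|<\infty$ into ``the $T$-columns span $\im\bd_{X,i}$'' should read ``span $\ker\bd_{X,i-1}\otimes\Qq$,'' and the two agree only when $\betti_{i-1}(X;\Qq)=0$; when that Betti number is positive the matroid still has bases but no $i$-CST exists, so the basis reformulation itself carries an implicit acyclicity assumption that would also need to be tracked on the dual side.
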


\begin{proof}
An equivalent statement is the following: the matroid\footnote{%
When $X$ is a simplicial complex, this matroid is known as a \emph{simplicial
matroid}; see, e.g., \cite{CL}.  (One could analogously define a \emph{cellular matroid}
as a matroid representable by the columns of some cellular boundary map.  This is
a more general class of matroids; for example, simplicial matroids must be simple, while
cellular matroids need not be.)}
 represented (over any field of characteristic~0, say $\Qq$) by the columns of
$M=\bd_{X,i}$ is dual to the matroid represented by the columns of $\bd_{Y,d-i}=\cbd_{X,i+1}$,
or equivalently by the rows of $N=\bd_{X,i+1}$.  Since $\bd_{X,i}\bd_{X,i+1}=0$, the
column span of $\bd_{X,i}$ (regarded as a vector subspace of $\Qq X_i$)
is orthogonal, under the standard inner product, to the row span of $\bd_{X,i+1}$.
On the other hand, these two subspaces have complementary dimension, because
  $$\rank M ~=~ |X_i|-\dim\ker M ~=~ |X_i|-\dim\im M ~=~ |X_i|-\rank N ~=~ |X_i|-\rank N.$$
The desired duality now follows from \cite[Exercise~2.2.10(ii)]{Oxley}.
\end{proof}

Duality carries over naturally to the algebraically weighted setting.
Specifically, let each cell $f\in X$ have an indeterminate weight $\xi_f$.
We define the weighted cellular boundary and coboundary maps
as in Section~\ref{WCMTT-section}, which give rise to
weighted Laplacians and spectrum polynomials as usual.  Then, we assign weights
to the cells of the dual complex $Y$ by the formula
  $$\xi_{f^*}=1/\xi_f.$$
It is routine to check that the formulas of~\eqref{duality-formula} carry over to the weighted setting, that is,
  \begin{equation} \label{duality-weighted}
  \Lwany_{X,i} = \Lwany_{Y,d-i} \qandq
  \Ewany_i(X;\q) = \Ewany_{d-i}(Y;\q),
  \end{equation}
where $\bullet\in\{\text{ud},\text{du},\text{tot}\}$. 

Furthermore, the matroid represented by $\wbd_{X,i}$ is identical to that represented
by $\bd_{X,i}$, since we have just adjoined the indeterminates
$\{\xi_f \st f\in X\}$ to the ground field, and then multiplied the rows and columns of the
matrix by nonzero scalars (which does not change the matroid structure).  Therefore, the proof of Proposition~\ref{dual-complexes}
is still valid if we replace the boundary and coboundary maps with their
algebraically weighted analogues.

\subsection{Spectrum polynomials of matroids}
Let $M$ be a matroid on ground set $V$, and let $X$ be the corresponding
independent set complex, that is, the simplicial complex on~$V$ whose
facets are the bases of~$M$.
Kook, Reiner and Stanton~\cite{KRS} defined the \emph{spectrum polynomial} of~$M$ to be
  $$\Spec_M(t;\q)=\sum_I t^{r(I)}\q^{|\langle I_1\rangle|},$$
where $r$ is the rank function of~$M$; $A\mapsto\langle A\rangle$ is its closure operator;
and $I_1$ is a certain subset of $I$ defined algorithmically (the details
are not needed in the present context).  The algorithm
depends on a choice of a total ordering for $V$, but $\Spec_M(t;\q)$
does not.  By \cite[Corollary~18]{KRS}, the spectrum polynomial records
the Laplacian eigenvalues of~$X$, via the formula
  \begin{equation} \label{eigenvals-from-specP}
  \sum_i\sum_{\lambda\in\Stot_i(X)} t^i \q^\lambda ~=~  \Etot(X;\q, t) ~=~ t^{-1} \q^{|V|} \Spec_M(t;\q^{-1}).
  \end{equation}

We are going to apply this result to calculate the Laplacian spectra
of a complete colorful complex, and thereby recover Adin's
torsion-weighted count of their simplicial spanning
trees~\cite{Adin}.  We will see that the dual to the $n$-cube arises
as a special case of a complete colorful complex.

Let $a_1,\dots,a_n$ be positive integers.  The \emph{complete colorful complex}
$X=X(a_1,\dots,a_n)$ is defined as follows.
Let $V_1=\{v_{1,1},\dots,v_{1,a_1}\},\dots,V_n=\{v_{n,1},\dots,v_{n,a_n}\}$ be~$n$ pairwise
disjoint vertex sets of cardinalities $a_1,\dots,a_n$.  We will regard each $V_i$
as colored with a different color.  Then $X$
is the pure $(n-1)$-dimensional simplicial complex on $V=V_1\cup\cdots\cup V_n$ defined by
  $$X = \{f\subseteq V\st |f\cap V_i|\leq 1\quad\forall i\}.$$
Define the \emph{color set} of an independent set $I$ to be the set
$C(I)=\{i \st I\cap V_i\neq\0\}$.  Also, for a set of colors $K\subseteq[n]$,
abbreviate $\bar K = [n]\sm K$ and $A(K) := \sum_{i\in K} a_i$.

Observe that $X$ is a matroid independence complex of a very simple form; the
matroid~$M$ is just the direct sum of $n$ rank-1 matroids
whose ground sets are the color classes~$V_i$.  In particular,
$X$ is Cohen-Macaulay \cite[pp.~88--89]{CCA}, so $\HH_i(X,\Zz)=0$
for every $i\leq n-2$.

If $n=2$, then $X$ is
a complete bipartite graph, while if $a_i=2$ for every~$i$ then $X$
is the boundary of an $n$-dimensional cross-polytope.

Let $W=\{v_{1,1},v_{2,1},\dots,v_{n,1}\}$.
For any independent set $I\subseteq V$ (i.e.,
any face of $X$), the subset $I_1$ produced
by the algorithm of~\cite{KRS} is just
$I_1 = I\sm W$; in particular,
$|\langle I_1\rangle|=A(C(I_1))$.  Therefore,
  \begin{align}
  \Spec_M(t;\q) &= \sum_{K\subseteq[n]}\quad \sum_{I\in X\st C(I)=K} t^{r(I)}\q^{|\langle I_1\rangle|}\notag\\
    &= \sum_{K\subseteq[n]} t^{|K|}\sum_{I\in X\st C(I)=K} \q^{|I \sm W|}\notag\\
    &= \sum_{K\subseteq[n]} t^{|K|} \prod_{k\in K} (1+(a_k-1)\q^{a_k}). \label{colorful-eigenvalues}
  \end{align}
Applying the Kook-Reiner-Stanton theorem \eqref{eigenvals-from-specP} and extracting
the coefficient of $t^i$ (for $-1\leq i\leq n-1=\dim X$), we obtain
  \begin{align}
  \Etot_i(X;\q)
 &= \sum_{\substack{K\subseteq[n]\\|K|=i+1}} \q^{A(\bar K)} \left(\prod_{k\in K} (\q^{a_k}+a_k-1)\right) \label{CCC-eigenvals}\notag\\
 &= \sum_{\substack{K\subseteq[n]\\|K|=i+1}} \left(\sum_{J\subseteq K}\left(\prod_{j\in K\sm J} (a_j-1)\right) \q^{A(\bar K)+A(K\sm J)}\right)\notag\\
 &= \sum_{\substack{K\subseteq[n]\\|K|=i+1}} \left(\sum_{J\subseteq K}\left(\prod_{j\in K\sm J} (a_j-1)\right) \q^{A(\bar J)}\right).
   \end{align}
 Therefore,
  \begin{equation} \label{colorful-omega}
  \omega_i(X) = \prod_{\substack{K\subseteq[n]\\|K|=i+1}} \left. \prod_{J\subseteq K} A(\bar J)^{\displaystyle\left[\prod_{j\in J}(a_j-1)\right]}\right.
  = \prod_{\substack{J\subseteq[n]\\|J|\leq i+1}} A(\bar J)^{\displaystyle\left[\binom{n-|J|}{i+1-|J|}\prod_{j\in J}(a_j-1)\right]}.
  \end{equation}
Plugging \eqref{colorful-omega} into~\eqref{tau-from-omega} gives
\begin{align}
\tau_k(X)
~&=~ \prod_{i=0}^{k-1} \prod_{\substack{J\subseteq[n]\\|J|\leq i+1}}
  A(\bar J)^{\displaystyle\left[ (-1)^{k-i+1}(k-i)\binom{n-|J|}{i+1-|J|}\prod_{j\in J}(a_j-1)\right]}\notag\\
~&=~ \prod_{j=0}^n \prod_{\substack{J\subseteq[n]\\|J|=j}}
  B(J)^{\displaystyle\left[\sum_{i=j-1}^{k-1} (-1)^{k-i+1}(k-i)\binom{n-j}{i+1-j}\right]}\label{enormous-parentheses}
\end{align}
where
  $$B(J) ~=~  A(\bar J)^{\displaystyle\left(\prod_{j\in J}(a_j-1)\right)}.$$
The bracketed exponent in \eqref{enormous-parentheses} can be simplified,
first by rewriting it in terms of the three quantities
$N=n-j$, $M=k-j$, $a=i+1-j$, and then by some routine calculations
(which we omit) using Pascal's recurrence:
  \begin{align*}
  \sum_{i=j-1}^{k-1}(-1)^{k-i+1}(k-i)\binom{n-j}{i+1-j}
  &= (-1)^M\sum_{a=0}^{M}(-1)^a(M-a+1)\binom{N}{a}\\
  &= (-1)^M\sum_{a=0}^{M}(-1)^a\binom{N-1}{a}\\
  &= \binom{N-2}{M} ~=~ \binom{n-j-2}{k-j}.
  \end{align*}
This is 
precisely the exponent that appears in Adin's formula
for $\tau_k(X)$ \cite[Theorem~1.5]{Adin}.  (Adin's $r$ is
our $n$, and Adin's $d$ is our $j$.)
In particular, it is zero when $j>k$.
Rewriting~\eqref{enormous-parentheses} recovers Adin's
formula for the tree enumerators of complete colorful complexes:

\begin{theorem}[Adin]
\label{count-colorful-trees}
Let $X$ be the complete colorful complex
with color classes of sizes $a_1,\dots,a_n$,
and define $B(J)$ as above.  Then
$$
\tau_k(X) ~=~
\prod_{j=0}^k \prod_{\substack{J\subseteq[n]\\|J|=j}}
  B(J)^{\binom{n-j-2}{k-j}}.
$$
\end{theorem}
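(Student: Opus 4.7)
The strategy is to combine the Kook--Reiner--Stanton formula for Laplacian spectra of matroid complexes with the Cellular Matrix-Tree Theorem in its $\omega$-product form~\eqref{tau-from-omega}. The key observation is that the complete colorful complex $X = X(a_1,\dots,a_n)$ is the independence complex of the direct sum $M = U_{1,a_1} \oplus \cdots \oplus U_{1,a_n}$ of rank-$1$ uniform matroids, so the Cohen--Macaulayness of $X$ ensures that $\tilde H_i(X;\mathbb{Z}) = 0$ for all $i < d = \dim X = n-1$, and in particular Corollary~\ref{alternating-product} (equivalently, the reformulation via~$\omega$'s) applies.

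First I would compute the Kook--Reiner--Stanton spectrum polynomial $\Spec_M(t;r)$ explicitly. Because $M$ is a direct sum of rank-$1$ matroids, choosing the ordering on $V$ so that $W = \{v_{1,1},\dots,v_{n,1}\}$ consists of the minimum element of each color class makes the algorithm transparent: for an independent set $I$ with color set $K = C(I)$, the subset $I_1$ is exactly $I \setminus W$, and hence $|\langle I_1 \rangle| = A(C(I_1))$. Summing over the choice of color set $K$ and then, within each color class $k \in K$, over the $a_k$ possible vertices yields the product formula~\eqref{colorful-eigenvalues}. Applying~\eqref{eigenvals-from-specP} and extracting the coefficient of $t^{i+1}$ then produces the explicit expression for $\Etot_i(X;r)$ displayed in the excerpt.

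From $\Etot_i$ I would read off the multiset of nonzero eigenvalues by regrouping the sum: after distributing $\prod_{k \in K}(r^{a_k} + a_k - 1)$ as a sum over subsets $J \subseteq K$, the exponents that appear are $A(\bar J)$ with multiplicity $\binom{n-|J|}{i+1-|J|}\prod_{j\in J}(a_j-1)$. Multiplying over the nonzero eigenvalues gives the closed form~\eqref{colorful-omega} for $\omega_i(X)$. Substituting this into~\eqref{tau-from-omega} and interchanging the two products so that $J$ is the outer index yields an expression for $\tau_k(X)$ as $\prod_{J \subseteq [n]} B(J)^{E(|J|)}$ where $B(J) = A(\bar J)^{\prod_{j\in J}(a_j-1)}$ and $E(j)$ is the alternating sum $\sum_{i=j-1}^{k-1}(-1)^{k-i+1}(k-i)\binom{n-j}{i+1-j}$.

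The main obstacle, and the only genuine computation, is showing that $E(j) = \binom{n-j-2}{k-j}$ (and in particular vanishes for $j > k$). Setting $N = n-j$, $M = k-j$, $a = i+1-j$ rewrites $E(j)$ as $(-1)^M \sum_{a=0}^{M}(-1)^a (M-a+1)\binom{N}{a}$. I would telescope this by the identity $(M-a+1)\binom{N}{a} = \binom{N}{a} + (M-a)\binom{N}{a}$ and then repeatedly apply Pascal's recurrence (or, equivalently, compute two discrete differences of $\binom{N}{a}$) to collapse the alternating sum to $\binom{N-2}{M}$; both the vanishing for $j > k$ and the correct exponent then appear simultaneously. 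Plugging this in gives Adin's formula and completes the proof.
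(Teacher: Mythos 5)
Your proposal is correct and follows essentially the same route as the paper: computing $\Spec_M(t;\q)$ for the direct sum of rank-$1$ matroids via $I_1 = I\setminus W$, converting to $\Etot_i(X;\q)$ by the Kook--Reiner--Stanton formula, reading off $\omega_i(X)$, substituting into~\eqref{tau-from-omega}, and collapsing the alternating binomial sum to $\binom{n-j-2}{k-j}$ via the substitution $N=n-j$, $M=k-j$, $a=i+1-j$ and Pascal's recurrence. Your telescoping of the factor $(M-a+1)$ just fills in the ``routine calculations'' the paper omits; no substantive difference.
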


In the special case that $a_i=2$ for all~$i$ (so $X$ is the boundary
of an $n$-dimensional cross-polytope), the formula~\eqref{colorful-eigenvalues} specializes to
  $$
  \Spec_M(t;\q) =  \sum_{k=0}^n \binom{n}{k} t^k (1+\q^2)^k
  $$
and then applying \eqref{eigenvals-from-specP}, we see that
  $$
  \sum_i\sum_{\lambda\in\Stot_i(X_n)} t^i \q^\lambda ~=~  \sum_{k=0}^n t^{k-1} \binom{n}{k} \q^{2n-2k}(1+\q^2)^k
 ~=~  \sum_{j=-1}^{n-1} t^j \binom{n}{j+1} \q^{2n-2j-2}(1+\q^2)^{j+1}
  $$
or equivalently
  $$\Etot_j(X_n;\q) = \binom{n}{j+1} \q^{2n-2j-2}(1+\q^2)^{j+1}.$$
The dual complex to an $n$-dimensional cross-polytope is the $n$-cube $Q_n$.
By formula~\eqref{duality-formula} (with $d=n-1$), we have therefore
  $$\Etot_k(Q_n;\q) ~=~ \Etot_{n-1-k}(X_n;\q) ~=~ \binom{n}{k} \q^{2k}(1+\q^2)^{n-k},$$
giving another proof of (the first formula of) Theorem~\ref{cube-eigenvalue-theorem}.


\end{document}